\newtheorem{theorem}{Theorem}[section]
\newtheorem{lemma}[theorem]{Lemma}
\newtheorem{proposition}{Proposition}
\theoremstyle{definition}
\newcommand{\ep}{\varepsilon}
\newcommand{\udm}{u_{d,m}}
\newcommand{\dep}{d_{\varepsilon}}
\newcommand{\mep}{m_{\varepsilon}}
\newcommand{\uep}{u_\varepsilon}
\newcommand{\vep}{v_\varepsilon}
\newcommand{\R}{\mathbb{R}}
\newcommand{\e}{\varepsilon}
\newcommand{\sfrac}[2]{\text{\small $\dfrac{#1}{#2}$}}
\title[ratio of total masses of species to resources] 
{On the ratio of total masses of species to resources for a logistic equation with Dirichlet boundary condition}
\author[Jumpei Inoue]{}
\subjclass{Primary: 35Q92, 35B30; Secondary: 35B09, 35B40}
\keywords{diffusive logistic equation, elliptic equations, the sub-super solution method, radial solutions, mathematical ecology.}
\email{j-inoue@toki.waseda.jp}
\thanks{The author is supported by JSPS KAKENHI Grant-in-Aid Grant Number 21J14292}
\begin{document}
\maketitle

\centerline{\scshape Jumpei Inoue }
\medskip
{\footnotesize
 \centerline{Department of Pure and Applied Mathematics}
 \centerline{Graduate School of Fundamental Science and Engineering}
   \centerline{Waseda University}
   \centerline{3-4-1 Ohkubo, Shinjuku-ku, Tokyo, 164-8555, Japan}
} 

\bigskip

\centerline{(Communicated by )}

\begin{abstract}

We consider the stationary problem for a diffusive logistic equation with the homogeneous Dirichlet boundary condition.
Concerning the corresponding Neumann problem, Wei-Ming Ni proposed a question as follows: Maximizing the ratio of the total masses of species to resources.
For this question, Bai, He and Li \cite{BaiHeLi} showed that the supremum of the ratio is 3 in the one dimensional case, and the author and Kuto \cite{InoueKuto} showed that the supremum is infinity in the multi-dimensional ball.
In this paper, we show the same results still hold true for the Dirichlet problem.
Our proof is based on the sub-super solution method and needs more delicate calculation because of the range of the diffusion rate for the existence of the solution.

\end{abstract}

\section{Introduction}

This paper is concerned with the following stationary problem for a diffusive logistic equation with the homogeneous Dirichlet boundary condition:
\begin{equation}\label{main_eq}
  \begin{cases}
    d\,\Delta u + u(m(x)-u)=0,\;\; u>0 &\text{in}\;\;\Omega, \\
    \; u = 0 &\text{on}\;\;\partial\Omega,
  \end{cases}
\end{equation}
where $\Omega \subseteq \R^n$ is a bounded domain with a smooth boundary; $d$ is a positive constant; $m(x)$ is a measurable function belonging to
\[ L_+^\infty (\Omega) := \{\, f \in L^\infty(\Omega) \mid f(x) \ge 0 \;\text{a.e.}\; x \in \Omega,\; \|f\|_{L^\infty} > 0 \,\}. \]
The unknown function $u(x)$ represents the distribution of the species and $m(x)$ can be interpreted as the distribution of resources, and moreover, $d$ represents the diffusion rate of the species.
The boundary condition assumes that the habitat region $\Omega$ is surrounded by an inhospitable environment for the species.
The existence, uniqueness, and stability of positive solutions were obtained by Cantrell and Cosner \cite{CantrellCosner}.
Before we introduce the existence and uniqueness result by \cite{CantrellCosner}, we prepare an eigenvalue problem corresponding to \eqref{main_eq}:
\begin{equation}\label{EVP}
  \begin{cases}
    \Delta \phi + \lambda m(x) \phi = 0 &\text{in}\;\;\Omega, \\
    \; \phi = 0 &\text{on}\;\;\partial\Omega.
  \end{cases}
\end{equation}
It is known that \eqref{EVP} has a positive sequence of eigenvalues depending on $m \in L_+^\infty(\Omega)$;
\[ (0 <\,)\, \lambda_1(m) < \lambda_2(m) \le \lambda_3(m) \le \dots \quad\text{with}\quad \lim_{j \to \infty} \lambda_j(m)=+\infty, \]
see \cite{CantrellCosner} and the references therein.

\begin{proposition}(\cite[Theorem 2.1]{CantrellCosner})\label{exist-sol}
  For all $m \in L_+^\infty(\Omega)$ and $d \in (0,1/\lambda_1(m))$, \eqref{main_eq} has a unique solution $\udm(x)$ in the class of $W^{2,p}(\Omega) \cap W_0^{1,p}(\Omega)$ for any $p > 1$. Furthermore, $\udm(x)$ is globally asymptotically stable in the sense that it attracts all positive solutions of the corresponding parabolic problem as $t \to \infty$.
\end{proposition}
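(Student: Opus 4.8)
The plan is to establish the three assertions separately: existence via the sub--super solution method, uniqueness via the Picone (Brezis--Oswald) trick, and global asymptotic stability via a parabolic squeezing argument. The useful first observation is a reformulation of the hypothesis on $d$: the condition $d\lambda_1(m)<1$ is equivalent to the principal Dirichlet eigenvalue $\sigma_1=\sigma_1(d,m)$ of the Schr\"odinger operator $-d\Delta-m(x)$ on $H_0^1(\Omega)$ being negative, as one sees by comparing the Rayleigh quotients $\lambda_1(m)^{-1}=\sup_{\phi\ne0}\int_\Omega m\phi^2\big/\int_\Omega|\nabla\phi|^2$ and $\sigma_1=\inf_{\phi\ne0}\int_\Omega\bigl(d|\nabla\phi|^2-m\phi^2\bigr)\big/\int_\Omega\phi^2$. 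Let $\phi_*>0$ be the corresponding principal eigenfunction, normalized so that $-d\Delta\phi_*=(m+\sigma_1)\phi_*$ in $\Omega$ and $\phi_*=0$ on $\partial\Omega$; elliptic regularity gives $\phi_*\in W^{2,p}(\Omega)$, hence $\phi_*\in L^\infty(\Omega)$.

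For existence I would take the constant $\bar u\equiv\|m\|_{L^\infty(\Omega)}$ as a supersolution, since $d\Delta\bar u+\bar u(m-\bar u)=\bar u(m-\|m\|_\infty)\le0$ and $\bar u\ge0$ on $\partial\Omega$, and $\underline u=\e\phi_*$ as a subsolution: because $-d\Delta\underline u=\underline u(m+\sigma_1)$, the inequality $-d\Delta\underline u\le\underline u(m-\underline u)$ holds as soon as $\e\phi_*\le-\sigma_1=|\sigma_1|$, which is achievable precisely because $\sigma_1<0$; shrinking $\e$ further ensures $\underline u\le\bar u$. I want to stress that this construction is \emph{insensitive} to $m$ vanishing on a set of positive measure, exactly because it uses the eigenfunction of $-d\Delta-m$ and not that of the weighted problem \eqref{EVP}. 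With the ordered pair $\underline u\le\bar u$ in hand, run the standard monotone iteration: fix $K>2\|m\|_{L^\infty}$ so that $s\mapsto Ks+s(m(x)-s)$ is nondecreasing on $[0,\|m\|_\infty]$, and let $u_0=\bar u$, $(-d\Delta+K)u_{n+1}=Ku_n+u_n(m-u_n)$ in $\Omega$, $u_{n+1}=0$ on $\partial\Omega$. The maximum principle makes $\{u_n\}$ nonincreasing and trapped in $[\underline u,\bar u]$; $L^p$ and Schauder estimates together with monotone convergence produce a limit $\udm\in W^{2,p}(\Omega)\cap W_0^{1,p}(\Omega)$ for every $p>1$ solving \eqref{main_eq}, with $\udm\ge\e\phi_*>0$ in $\Omega$ by the strong maximum principle and $\partial_\nu\udm<0$ on $\partial\Omega$ by Hopf's lemma.

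Uniqueness I would derive from the sublinearity of $u\mapsto u(m-u)$. If $u_1,u_2$ are positive solutions, Hopf's lemma gives $u_i(x)\gtrsim\operatorname{dist}(x,\partial\Omega)$, so $u_2^2/u_1-u_1$ and $u_1^2/u_2-u_2$ belong to $H_0^1(\Omega)$. Using $-d\Delta u_i/u_i=m-u_i$, testing and Green's identity, the quantity
\[ I:=\int_\Omega\bigl[(m-u_1)-(m-u_2)\bigr](u_1^2-u_2^2)\,dx=-\int_\Omega(u_1-u_2)^2(u_1+u_2)\,dx\le0 \]
can, on the other hand, be rewritten by Picone's identity $|\nabla v|^2-\nabla u\cdot\nabla(v^2/u)=|\nabla v-(v/u)\nabla u|^2$ as
\[ I=d\int_\Omega\Bigl(\bigl|\nabla u_1-\tfrac{u_1}{u_2}\nabla u_2\bigr|^2+\bigl|\nabla u_2-\tfrac{u_2}{u_1}\nabla u_1\bigr|^2\Bigr)dx\ge0. \]
Hence $I=0$, and since $u_1+u_2>0$ in $\Omega$ this forces $u_1\equiv u_2$.

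For global asymptotic stability, let $u(x,t)$ solve the parabolic problem $u_t=d\Delta u+u(m-u)$ with $u=0$ on $\partial\Omega$ and initial datum $u_0\ge0$, $u_0\not\equiv0$; the strong maximum principle gives $u(\cdot,t_0)>0$ in $\Omega$ with negative inward normal derivative for any fixed $t_0>0$, hence $\e\phi_*\le u(\cdot,t_0)\le M$ for suitably small $\e$ (with $\e\phi_*$ a stationary subsolution) and a large constant $M$ (a stationary supersolution). The flows $\underline w,\overline w$ issued at time $t_0$ from $\e\phi_*$ and $M$ are respectively nondecreasing and nonincreasing in $t$, remain in $[\e\phi_*,M]$, and therefore converge as $t\to\infty$ to positive equilibria, which by the uniqueness just proved both equal $\udm$; since $\underline w(\cdot,t)\le u(\cdot,t)\le\overline w(\cdot,t)$ for $t\ge t_0$ by the comparison principle, $u(\cdot,t)\to\udm$ uniformly. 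The main obstacle is the degeneracy permitted in $m$: the subsolution must be chosen so the argument survives $m$ vanishing on a positive-measure set (hence the passage through $-d\Delta-m$ instead of \eqref{EVP}), and the integrations by parts in the Picone computation must be justified near $\partial\Omega$, where both solutions vanish simultaneously and the quotients $u_i/u_j$ are controlled only through Hopf's lemma.
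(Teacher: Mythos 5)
The paper offers no proof of this proposition: it is quoted verbatim from Cantrell--Cosner \cite[Theorem 2.1]{CantrellCosner}, so there is nothing internal to compare against. Your argument is a correct, self-contained proof along classical lines, and the individual steps check out: the equivalence $d<1/\lambda_1(m)\Leftrightarrow\sigma_1(-d\Delta-m)<0$ follows from comparing the two Rayleigh quotients exactly as you say; the pair $\underline u=\e\phi_*$, $\bar u\equiv\|m\|_{L^\infty}$ is an ordered sub/supersolution pair because $d\Delta(\e\phi_*)+\e\phi_*(m-\e\phi_*)=\e\phi_*(-\sigma_1-\e\phi_*)\ge0$ for small $\e$; the Picone computation for uniqueness is the Brezis--Oswald identity and the boundary terms are controlled since Hopf's lemma plus $C^1$ regularity give $u_2/u_1$ bounded, so $u_2^2/u_1-u_1$ is Lipschitz and vanishes on $\partial\Omega$, hence lies in $H_0^1(\Omega)$; and the squeezing between the monotone orbits issued from $\e\phi_*$ and $M$ is the standard route to global attractivity once uniqueness is known. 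The main methodological difference from the cited source is that Cantrell and Cosner organize everything around the weighted eigenvalue problem \eqref{EVP} (bifurcation of the positive branch from the trivial solution at $d=1/\lambda_1(m)$, with stability read off from the linearization), whereas you pass to the unweighted Schr\"odinger operator $-d\Delta-m$; as you note, this sidesteps any delicacy coming from $m$ vanishing on a set of positive measure, which is precisely the situation exploited later in the paper with the resource functions \eqref{resource1} and \eqref{resource2}. Either formulation is fine here; your version is arguably the more elementary and is well suited to the degenerate weights used in Sections 2 and 3.
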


Related to the diffusive logistic equation \eqref{main_eq}, we deal with the following question:
``What is the supremum of
\begin{equation}\label{ratio}
  \frac{\|\udm\|_{L^1(\Omega)}}{\|m\|_{L^1(\Omega)}} = \frac{\int_\Omega \udm \,dx}{\int_\Omega m \,dx}
\end{equation}
for any $(d,m) \in (0,1/\lambda_1(m)) \times L_+^\infty(\Omega)$?''
The above question was proposed by Wei-Ming Ni in the setting of the Neumann boundary condition.
Replacing the Dirichlet boundary condition with the Neumann one, it is also well-known that there exists a unique solution of \eqref{main_eq} for any $(d,m) \in (0,\infty) \times L_+^\infty(\Omega)$, see \cite{CantrellCosner}.
For the Neumann boundary condition, the supremum of \eqref{ratio} is known.
In the one-dimensional case, Bai, He and Li \cite{BaiHeLi} proved that the supremum is $3$.
On the other hand, in the case of the higher dimensions ($n \ge 2$), the author and Kuto \cite{InoueKuto} showed that the supremum is infinity when the domain is a multi-dimensional ball.
Heo and Kim \cite{HeoKim} improved the higher-dimensional result with respect to the setting of the general domain with a smooth boundary.

In this paper, we consider the Ni's problem for the Dirichlet boundary condition, and we obtain the same supremum of \eqref{ratio} as the Neumann boundary condition for any dimension as follows:

\begin{theorem}\label{main_thm1}
  Assume that $n=1$, and $\Omega = (-1,1)$.
  Let $\udm(x)$ be a solution of \eqref{main_eq}.
  For any $(d,m) \in (0,1/\lambda_1(m)) \times L_+^\infty(\Omega)$, the following inequality holds:
  \begin{equation}\label{result1}
    \frac{ \|\udm\|_{L^1(\Omega)} }{ \|m\|_{L^1(\Omega)} } < 3.
  \end{equation}
  Moreover,
  \begin{equation}\label{result2}
    \sup_{(d,m) \in (0,1/\lambda_1(m)) \times L_+^\infty(\Omega)} \frac{\|\udm\|_{L^1(\Omega)}}{\|m\|_{L^1(\Omega)}} = 3.
  \end{equation}
\end{theorem}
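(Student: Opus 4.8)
The plan is to split the statement into the strict upper bound \eqref{result1} and the optimality \eqref{result2}, which requires constructing a near-extremal sequence. For the upper bound, the natural strategy is to exploit the one-dimensional structure: after a scaling $x = \sqrt{d}\,y$ the equation becomes $u'' + u(m-u) = 0$ on a large interval, but since $d < 1/\lambda_1(m)$ the admissible range of $d$ is constrained, so I would instead work directly on $(-1,1)$. Integrating the equation gives $d\int_\Omega u\,dx \cdot (\text{something}) $ — more precisely, integrating $d u'' = u^2 - mu$ over $(-1,1)$ yields $d(u'(1)-u'(-1)) = \int u^2 - \int mu$, and since $u>0$ in the interior with $u=0$ on the boundary we have $u'(1)\le 0\le u'(-1)$, hence $\int_\Omega mu\,dx \ge \int_\Omega u^2\,dx$. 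The key inequality I expect to need is a sharpened Poincaré/interpolation-type estimate relating $\int u$, $\int u^2$, and the Dirichlet data; one multiplies the equation by suitable test functions (e.g. $1$, $u$, and perhaps $x u'$ or a distance-type weight) to produce the constant $3$. Concretely, testing against $u$ gives $d\int (u')^2 = \int mu^2 - \int u^3$, and combining the resulting identities with Cauchy–Schwarz and the sign conditions should force $\|u\|_{L^1}/\|m\|_{L^1} < 3$ strictly, the strictness coming from the fact that equality in the intermediate inequalities cannot hold for a genuine positive solution satisfying the Dirichlet condition.

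For the optimality \eqref{result2}, I would construct a sequence $(d_\e, m_\e)$ making the ratio approach $3$. The heuristic from the Neumann case is that extremal behavior occurs in a singular limit where $m$ concentrates and $u$ develops a plateau at height $\approx m$ on most of the domain while dropping to zero in thin boundary layers. Since here $d$ must stay below $1/\lambda_1(m)$, I would choose $m_\e$ large (say $m_\e \equiv M_\e \to \infty$, or a large constant on a subinterval) so that $\lambda_1(m_\e)\to 0$ and a suitable $d_\e \to 0$ remains admissible; then $u_{d_\e,m_\e}$ should converge, away from $\partial\Omega$, to the solution of the algebraic equation $u(m-u)=0$, i.e. $u \approx m$, while near the boundary $u$ interpolates down to $0$ in a layer of width $O(\sqrt{d_\e})$. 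The sub–super solution method (Proposition — the existence result, together with monotone iteration) is the tool to make this rigorous: build an explicit subsolution that is $m_\e$ on the bulk and a quadratic/parabola profile in the layer, and a supersolution slightly above, sandwiching $u_{d_\e,m_\e}$ and showing $\int u_{d_\e,m_\e} \to \int m_\e$... but wait, that would give ratio $1$, not $3$. The factor $3$ must instead come from a different profile: take $m_\e$ to be a tall narrow spike, so that $u$ overshoots — $u$ spreads out over a region much wider than the support of $m$, and the ratio of $\int u$ to $\int m$ approaches $3$ as in the one-dimensional Neumann computation of \cite{BaiHeLi}. So I would mimic their spike construction, $m_\e = $ (large) $\cdot \chi_{(-\delta_\e,\delta_\e)}$ with parameters tuned so $d_\e < 1/\lambda_1(m_\e)$, and verify via sub–super solutions that the corresponding $u_{d_\e,m_\e}$ realizes a ratio tending to $3$.

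The main obstacle I anticipate is precisely the interplay between the constraint $d < 1/\lambda_1(m)$ and the singular limit: in the Neumann problem \cite{BaiHeLi} one has full freedom in $(d,m)$, whereas here the extremal construction must simultaneously keep $d_\e$ admissible (which pushes $d_\e$ small when $\lambda_1(m_\e)$ is large) and produce the right overshoot geometry. Getting the two-parameter scaling to close — i.e. choosing $\delta_\e$, the spike height, and $d_\e$ as functions of $\e$ so that all asymptotics are consistent and the sub/supersolution ordering holds on $(-1,1)$ with the Dirichlet condition — is the delicate bookkeeping the abstract says is needed. A secondary technical point is establishing strictness in \eqref{result1}: one must rule out the degenerate equality case, which I would handle by a compactness/contradiction argument showing any equality-achieving solution would have to be an eigenfunction-type profile incompatible with the nonlinear term.
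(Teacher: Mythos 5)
Your plan for the optimality part \eqref{result2} is essentially the paper's strategy: take the spike $m_\e=\e^{-1}\chi_{(-\e,\e)}$, verify that a small $d_\e$ remains admissible by estimating $\lambda_1(m_\e)$ (the paper shows $1/\lambda_1(m_\e)\ge 1-\e$ and then takes $d_\e=\sqrt{\e}$), and sandwich $u_{d_\e,m_\e}$ between sub- and super-solutions built from the Neumann maximizing sequence $v_\e$ of \cite{BaiHeLi} (the paper uses $\overline{U}_\e=v_\e$ and $\underline{U}_\e=(1-\e^{1/4})(v_\e-v_\e(1))$, together with the estimates $v_\e(1)=O(\sqrt\e)$ from \cite{Inoue}). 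You have correctly identified the one genuinely new difficulty — keeping $d_\e<1/\lambda_1(m_\e)$ — even though the concrete scalings are left open.

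The genuine gap is in your argument for the upper bound \eqref{result1}. The identities you derive ($\int_\Omega mu\ge\int_\Omega u^2$ from testing with $1$, and $d\int(u')^2=\int mu^2-\int u^3$ from testing with $u$) do not combine via Cauchy--Schwarz to yield the constant $3$; there is no visible mechanism in your sketch that produces that specific number, and even in the Neumann case the bound $3$ is not a soft energy estimate but the main content of \cite[Theorem 1.1]{BaiHeLi}. Your fallback of "a compactness/contradiction argument" for strictness would at best rule out equality, not establish the bound itself. What you are missing is a one-line reduction: the Neumann solution $\overline{u}_{d,m}$ of \eqref{Neumann-1dim} is a super-solution of the Dirichlet problem \eqref{main_eq} and $\udm$ is a sub-solution of itself, so $\udm\le\overline{u}_{d,m}$ on $(-1,1)$; hence
\[
  \frac{\int_{-1}^{1}\udm\,dx}{\int_{-1}^{1}m\,dx}\;\le\;\frac{\int_{-1}^{1}\overline{u}_{d,m}\,dx}{\int_{-1}^{1}m\,dx}\;<\;3
\]
by the Neumann result of \cite{BaiHeLi}, valid for every $d\in(0,1/\lambda_1(m))\subset(0,\infty)$. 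Without this comparison (or a full re-derivation of the Bai--He--Li bound adapted to the Dirichlet condition), your proof of \eqref{result1} does not close.
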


\begin{theorem}\label{main_thm2}
  Assume that $n \ge 2$ and $\Omega = B_0(1)$.
  Let $\udm(x)$ be a solution of \eqref{main_eq}, then it holds that
  \begin{equation}\label{result3}
    \sup_{(d,m) \in (0,1/\lambda_1(m)) \times L_+^\infty(B_0(1))} \frac{\|\udm\|_{L^1(B_0(1))}}{\|m\|_{L^1(B_0(1))}} = +\infty.
  \end{equation}
\end{theorem}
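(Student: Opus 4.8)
The plan is to produce, for each small $\e>0$, a pair $(\dep,\mep)\in(0,1/\lambda_1(\mep))\times L_+^\infty(B_0(1))$ along which the ratio in \eqref{result3} blows up. As in the Neumann setting this is a scaling construction, but it must now be carried out so as to respect the upper bound $1/\lambda_1(\mep)$ on the diffusion rate. Take $\mep:=\chi_{B_0(\e)}$, so that $\|\mep\|_{L^1(B_0(1))}=\omega_n\e^n$ with $\omega_n:=|B_0(1)|$, and $\dep:=\tau_0\e^2$ for a constant $\tau_0>0$ to be fixed. Setting $W_\e(y):=\udepmep(\e y)$ for $y\in B_0(1/\e)$, a change of variables transforms \eqref{main_eq} into
\begin{equation*}
  \tau_0\,\Delta W_\e+W_\e\bigl(\chi_{B_0(1)}-W_\e\bigr)=0\ \ \text{in }B_0(1/\e),\qquad W_\e=0\ \ \text{on }\partial B_0(1/\e),
\end{equation*}
and gives $\|\udepmep\|_{L^1(B_0(1))}=\e^n\|W_\e\|_{L^1(B_0(1/\e))}$, so the ratio in \eqref{result3} equals $\omega_n^{-1}\|W_\e\|_{L^1(B_0(1/\e))}$. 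Hence it suffices to prove $\|W_\e\|_{L^1(B_0(1/\e))}\to+\infty$ as $\e\to0^+$.

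Next I would check that $\dep<1/\lambda_1(\mep)$ for small $\e$. The same rescaling gives $\lambda_1(\mep)=\e^{-2}\Lambda_{1/\e}$, where $\Lambda_R:=\inf\bigl\{\int_{B_0(R)}|\nabla\psi|^2\big/\int_{B_0(1)}\psi^2:\psi\in W_0^{1,2}(B_0(R))\setminus\{0\}\bigr\}$ is nonincreasing in $R$, with limit $\Lambda_\infty>0$ when $n\ge3$ (by the Sobolev inequality) and $\Lambda_\infty=0$ when $n=2$ (by logarithmic cut-offs). Fixing any $\tau_0\in(0,1/\Lambda_\infty)$ (with the convention $1/\Lambda_\infty=+\infty$ when $n=2$), we have $\dep=\tau_0\e^2<\e^2/\Lambda_{1/\e}=1/\lambda_1(\mep)$ once $\e$ is small, so Proposition~\ref{exist-sol} applies on $B_0(1/\e)$ and identifies $W_\e$ as the unique positive solution of the rescaled problem. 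Comparison together with this uniqueness shows that $W_\e$ increases as $\e\downarrow0$; since $0<W_\e\le1$, interior elliptic estimates produce a positive solution $W_*$ of $\tau_0\Delta W+W(\chi_{B_0(1)}-W)=0$ on $\R^n$ with $W_\e\uparrow W_*$. By monotone convergence $\int_{B_0(R_0)}W_\e\to\int_{B_0(R_0)}W_*$ for every fixed $R_0$, so the goal reduces to proving $W_*\notin L^1(\R^n)$.

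To this end I would apply the sub-super solution method on $\{|x|>1\}$, where $W_*$ solves $\tau_0\Delta W_*=W_*^2$. With $\alpha:=\max\{2,n-1\}$, the function $\underline\psi(x):=c_0|x|^{-\alpha}$ satisfies, using $\Delta(r^{-\alpha})=\alpha(\alpha+2-n)r^{-\alpha-2}$ and $\alpha\ge2$, the subsolution inequality $\tau_0\Delta\underline\psi\ge\underline\psi^2$ on $\{|x|>1\}$ as soon as $c_0\le\tau_0\,\alpha(\alpha+2-n)$, this last quantity being positive for every $n\ge2$; choosing in addition $c_0\le\min_{\partial B_0(1)}W_*$ (positive by the strong maximum principle) gives $\underline\psi\le W_*$ on $\partial B_0(1)$. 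Then $z:=\underline\psi-W_*$ satisfies $\tau_0\Delta z-(\underline\psi+W_*)z\ge0$ on every annulus $\{1<|x|<R\}$, with nonpositive zeroth-order coefficient, while $z\le c_0R^{-\alpha}$ on $\partial B_0(R)$ and $z\le0$ on $\partial B_0(1)$; the weak maximum principle followed by $R\to\infty$ yields $\underline\psi\le W_*$ on $\{|x|>1\}$. Consequently
\begin{equation*}
  \|W_*\|_{L^1(\R^n)}\ \ge\ c_0\int_{\{|x|>1\}}|x|^{-\alpha}\,dx\ =\ c_0\,n\omega_n\int_1^\infty r^{\,n-1-\alpha}\,dr\ =\ +\infty,
\end{equation*}
since $n-1-\alpha\ge-1$. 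Combined with the previous paragraph, this gives $\|W_\e\|_{L^1(B_0(1/\e))}\to+\infty$, and hence the supremum in \eqref{result3} is $+\infty$.

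The step I expect to be the main obstacle is the one singled out in the introduction: the scaling of $\dep$ must be matched against the ceiling $1/\lambda_1(\mep)$, which in rescaled form is the principal eigenvalue $\Lambda_{1/\e}$ of the expanding ball $B_0(1/\e)$ with weight $\chi_{B_0(1)}$. In the Neumann problem $d$ is unconstrained and one simply takes it large, whereas here $\tau_0$ has to be kept strictly below $1/\Lambda_\infty$; one must then verify that this is still enough for the limit problem on $\R^n$ to admit a nontrivial solution, so that one can invoke the key fact that any positive solution of $\tau_0\Delta W=W^2$ on an exterior domain of $\R^n$ is non-integrable when $n\ge2$. The dimension $n=2$, where $\Lambda_\infty=0$ and the scaling is critical, is the delicate case and requires the most care, both in the choice of $\tau_0$ and in showing that $W_\e$ converges to a nontrivial $W_*$.
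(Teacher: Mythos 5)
Your argument is correct, but it takes a genuinely different route from the paper. The paper works directly on $B_0(1)$ with $m_\e=\e^{-n}\chi_{B_0(\e)}$ and $d_\e=c_1\e^{2-n}$, builds an explicit radial sub-solution (an exponential profile $\tfrac{c_2}{\e^n}e^{-|x|^n/\e^n}$ inside $B_0(\e)$ matched to $\tfrac{c_2}{e|x|^n}$ outside, shifted down by $c_2/e$ to meet the Dirichlet condition), verifies the differential inequality under explicit constraints (C1)--(C4) on $(c_1,c_2)$, controls $1/\lambda_1(m_\e)$ by a single explicit test function, and then computes $\|\underline u_\e\|_{L^1}$ exactly, obtaining the quantitative rate $c_2(\tfrac{n}{e}|\log\e|+1-\tfrac{2}{e})$. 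Your construction is the same scaling in disguise (your rescaled problem on $B_0(1/\e)$ with diffusion $\tau_0$ is exactly the paper's problem after $u=\e^{-n}W(x/\e)$, with $\tau_0=c_1$), but you replace the explicit barrier by a soft compactness argument: monotone exhaustion to an entire positive solution $W_*$ of $\tau_0\Delta W+W(\chi_{B_0(1)}-W)=0$, followed by the lower bound $W_*\ge c_0|x|^{-\max\{2,n-1\}}$ on the exterior region via comparison, whence $W_*\notin L^1(\R^n)$. What the paper's approach buys is an explicit divergence rate in $\e$ and no need for a limiting problem; what yours buys is a cleaner conceptual reason for the blow-up (non-integrability of positive solutions of $\tau_0\Delta W=W^2$ on exterior domains for $n\ge2$) and a transparent treatment of the eigenvalue constraint through the monotone quantity $\Lambda_R$, including the correct identification of $n=2$ as the critical case.

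One step deserves a line more of justification than you give it: the monotonicity $W_\e\uparrow$ as $\e\downarrow0$ does not follow from uniqueness alone via the usual sandwich, because when the weight $\chi_{B_0(1)}$ vanishes on part of the domain the standard sub-solution $\delta\phi_1$ fails to be a sub-solution where the weight is zero. The clean fix is already available in Proposition~\ref{exist-sol}: $W_{\e'}|_{B_0(1/\e)}$ is a super-solution, so the parabolic flow started from it decreases in $t$ and, by the global asymptotic stability asserted there, converges to $W_\e$, giving $W_\e\le W_{\e'}$ (and likewise $W_\e\le1$). With that remark inserted, the proof is complete.
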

We use the notation $B_0(r) := \{ x \in \R^n \mid |x| < r \}$.

It is noted that, for any fixed $m\in L^{\infty}_+(\Omega)$, the range of $d$ for the existence of positive solutions depends on the boundary conditions.
In the Dirichlet boundary condition, the range is $(0, 1/\lambda_1(m))$,
while in the Neumann boundary condition, the range is $(0,\infty)$.
From the view point of the bifurcation diagram,
the projection of the branch of positive solutions on $d$ axis is $(0,1/\lambda_1(m))$ in the Dirichlet boundary condition,
and the branch bifurcates from the trivial solution at $d = 1/\lambda_1$.
Moreover, it is well known that, for each $d \in (0,1/\lambda_{1}(m))$,
the positive solution for the Dirichlet problem is less than the one for the Neumann problem.
Obviously, it holds that
\[ \inf_{(d,m) \in (0,1/\lambda_1(m)) \times L^{\infty}_+(\Omega)} \frac{\|\udm\|_{L^1}}{\|m\|_{L^1}} = 0 \]
for the Dirichlet problem.
Here, we note that the infimum of the ratio over $(d,m) \in (0,\infty) \times L_+^\infty(\Omega)$ attains $1$ in the case where $m(x)$ is constant for the Neumann problem.
On the other hand, the above results show that the supremum of the ratio is the same regardless of the boundary conditions.

The proof is based on the sub-super solution method.
The key step is to construct the family of sub-solutions $\{ \underline{u}_\e = \underline{u}_{\dep, \mep} \}_{\e>0}$ which satisfies
\[
  \lim_{\e \to 0} \frac{ \|\underline{u}_\e\|_{L^1} }{ \|\mep\|_{L^1} } = 
  \begin{cases} 3 & (n=1) \\ + \infty & (n \ge 2) \end{cases}
\]
with some suitable choice $(\dep, \mep) \in (0, 1/\lambda_1(\mep)) \times L_+^\infty(\Omega)$.
Different from the Neumann problem, the construction requires a somewhat delicate treatment such that the range of $\dep$ is restricted to $(0, 1/\lambda_1(\mep))$.


We refer to \cite{DeAngelisZhangNiWang, Inoue, InoueKuto, LiLou, LiangLou, Lou1, Mazzari, MazzariNadinPrivat, NagaharaYanagida} for the Neumann boundary condition, and \cite{HeLamLouNi, HeNi1, HeNi2, HeNi3, HeNi4, HeNi5, Lou3, LouWang} for applications to a class of diffusive Lotka-Volterra systems.
See also the book chapters \cite{LamLou}, \cite{Lou2} and \cite{Ni} to know recent studies for \eqref{main_eq} and related problems.

This paper consists of four sections.
Section 2 is devoted to the proof of the one-dimensional result, that is, Theorem \ref{main_thm1}.
Theorem \ref{main_thm2} is proved in Section 3.
Finally, the appendix shows an estimate for the least eigenvalue in two dimension.

\section{Proof of 1-dimensional results}

Assume $n=1$ and let $\Omega$ be an open interval $(-1,1)$.
In this section we show Theorem \ref{main_thm1}.

\begin{lemma}\label{lem1-1}
  Let $\udm$ be the solution of \eqref{main_eq}.
  Then the following inequality holds for any $(d,m) \in (0,1/\lambda_1(m)) \times L_+^\infty(-1,1)$:
  \[ \frac{ \|\udm\|_{L^1(-1,1)} }{ \|m\|_{L^1(-1,1)} } < 3. \]
\end{lemma}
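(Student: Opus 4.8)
The plan is to bound $\int_{-1}^{1}\udm\,dx$ by splitting $(-1,1)$ according to the sign of $\udm-m$ and exploiting the convexity/concavity dichotomy that \eqref{main_eq} forces on $\udm$. Write $u:=\udm$. Since $u\in C^{1}[-1,1]$ by the Sobolev embedding in Proposition~\ref{exist-sol} and $u>0$ in $(-1,1)$, Hopf's lemma gives $u'(-1)>0>u'(1)$, and integrating \eqref{main_eq} over $(-1,1)$ produces the identity
\[
  \int_{-1}^{1}u(m-u)\,dx \;=\; d\bigl(u'(-1)-u'(1)\bigr)\;>\;0 .
\]
In particular $u\ge m$ cannot hold a.e.\ in $(-1,1)$, so the set $\Omega^{-}:=\{x:u(x)<m(x)\}$ has positive measure; since $m\ge0$ and $m>u$ on $\Omega^{-}$, this gives $\int_{-1}^{1}m\ge\int_{\Omega^{-}}m>\int_{\Omega^{-}}u$. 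Writing $\Omega^{+}:=\{x:u(x)\ge m(x)\}$ we obtain $\int_{-1}^{1}u=\int_{\Omega^{+}}u+\int_{\Omega^{-}}u<\int_{\Omega^{+}}u+\int_{-1}^{1}m$, so the whole statement reduces to the sharp inequality
\[
  \int_{\Omega^{+}}u\,dx \;\le\; 2\int_{-1}^{1}m\,dx .
\]

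The heart of the argument is this last inequality. On $\Omega^{+}$ the equation reads $du''=u(u-m)$ with $0\le du''\le u^{2}$, so $u$ is convex there and $\Omega^{+}$ is a countable union of open intervals, on each of which $u$ lies below the chord through its endpoint values; every endpoint is either one of $\pm1$ (where $u=0$) or an interior point where $u=m$. One estimates the contribution of each component by combining convexity with the differential inequality $du''\le u^{2}$: on the two components abutting $\pm1$ the derivative $u'$ keeps a fixed sign, and multiplying $du''\le u^{2}$ by $u'$ and integrating yields a first-integral bound carrying the constant $\tfrac23$, which is where the final $3$ originates. The one-sided derivatives at the component endpoints and the values $m$ takes there are then tied back, via the identity above, to $\int_{\Omega^{-}}u(m-u)$ and to $\int_{\Omega^{-}}m$; using $\int m\ge\int_{\Omega^{-}}m$ and summing over components gives $\int_{\Omega^{+}}u\le2\int m$. (Components of $\Omega^{+}$ with both endpoints interior are handled by the same convexity estimate, $u\ge m$ on them keeping their contribution under control.)

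The main obstacle is sharpness of this last step: the bound $\int_{a}^{b}u\le\frac12(u(a)+u(b))(b-a)$ coming from convexity alone yields a constant strictly bigger than $2$, so one must genuinely use $m\ge0$ (through $du''\le u^{2}$) together with the restriction $d\in(0,1/\lambda_{1}(m))$. The latter is essential: it is precisely the constraint that rules out the degenerate regimes — for instance $m$ a tall, narrow plateau with $d$ taken very small — in which the crude estimates would blow up, and it has no analogue in the Neumann problem, which is why the computation here is more delicate. That the constant $2$, equivalently the $3$ in \eqref{result1}, cannot be improved is witnessed by the explicit family of sub-solutions constructed for \eqref{result2}. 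A minor technical point is that $m\in L^{\infty}_{+}$ need not be continuous, so the relation "$u=m$ on $\partial\Omega^{\pm}$" must be read in an essential-limit sense; this is handled by first proving the inequality for continuous $m$ and then approximating, both $\|\udm\|_{L^{1}}$ and $\|m\|_{L^{1}}$ varying continuously with $m$ as long as $d$ remains in the admissible interval.
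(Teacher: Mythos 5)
Your reduction is where the argument breaks: after the (correct) observations that $\int_{-1}^{1}u(m-u)\,dx>0$ and $\int_{\Omega^{-}}u<\int_{\Omega^{-}}m\le\int_{-1}^{1}m$, you reduce everything to the claim $\int_{\Omega^{+}}u\le 2\int_{-1}^{1}m$, and this claim is not merely unproven (the ``first-integral bound carrying the constant $\tfrac23$'' is only gestured at, never carried out) — it is \emph{false}. The paper's own near-extremal family furnishes a counterexample: take $m=\mep$ as in \eqref{resource1} and $d=\sqrt{\e}$. Then $m$ vanishes off $(-\e,\e)$ while $u>0$ there, so $\Omega^{-}\subseteq[-\e,\e]$; by \eqref{I2} one has $u\le\vep\approx\tfrac{3}{2\sqrt{\e}}<\tfrac1\e=m$ on $(-\e,\e)$, hence $\Omega^{-}=(-\e,\e)$ and $\int_{\Omega^{-}}u=O(\sqrt{\e})\to0$. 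Since $\int_{-1}^{1}u\to 6=3\int_{-1}^{1}m$, it follows that $\int_{\Omega^{+}}u\to 3\int_{-1}^{1}m>2\int_{-1}^{1}m$. The loss occurs in your step $\int_{\Omega^{-}}u<\int_{-1}^{1}m$: near the extremizer the left side tends to $0$ while the right side stays at $2$, so the factor of $\int m$ you ``save'' there is illusory, and the remaining set $\Omega^{+}$ must absorb essentially the full factor $3$. Any correct direct proof has to couple the two regions through the equation rather than estimate them separately. (A secondary issue: your claim that the restriction $d<1/\lambda_{1}(m)$ is essential to the inequality is misleading — it is needed only for existence of $\udm$, not for the bound itself.)

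For comparison, the paper's proof is a two-line reduction to the known Neumann result: the Neumann solution $\overline{u}_{d,m}$ of \eqref{Neumann-1dim} solves the equation in the interior and is nonnegative on $\partial\Omega$, hence is a super-solution of the Dirichlet problem, so $\udm\le\overline{u}_{d,m}$ on $(-1,1)$; then \cite[Theorem 1.1]{BaiHeLi} gives $\int\overline{u}_{d,m}<3\int m$, and the Dirichlet bound follows by monotonicity of the $L^{1}$ norm. If you want a self-contained direct proof, the right move is to adapt the Bai--He--Li first-integral argument globally (it already works for all $d>0$), not to split the interval by the sign of $u-m$.
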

\begin{proof}
  Let $\overline{u}_{d,m}$ be a solution of the Neumann problem:
  \begin{equation}\label{Neumann-1dim}
    \begin{cases}
      d\,u'' + u(m(x)-u)=0,\;\; u>0 &(-1<x<1), \\
      \; u'(-1) = u'(1) = 0.
    \end{cases}
  \end{equation}
  The existence and uniqueness of $\overline{u}_{d,m}$ can be found in \cite{CantrellCosner} and the references therein.
  We can see that $\overline{u}_{d,m}$ is a super-solution of \eqref{main_eq} and $\udm$ itself is a sub-solution.
  Hence $\udm \le \overline{u}_{d,m}$ for $x \in (-1,1)$.
  The definition of the sub-super solution can be found in \cite{Du, InoueKuto}.
  Referring to the results for the Neumann boundary condition \cite[Theorem 1.1]{BaiHeLi}, we obtain
  \[ \frac{\int_{-1}^1 \udm \,dx}{\int_{-1}^1 m \,dx} \le \frac{\int_{-1}^1 \overline{u}_{d,m} \,dx}{\int_{-1}^1 m \,dx} < 3 \]
  for any $(d,m) \in (0,1/\lambda_1(m)) \times L_+^\infty(-1,1)$.
\end{proof}
Lemma \ref{lem1-1} proves \eqref{result1} in Theorem \ref{main_thm1}.

Next we show \eqref{result2}.
For small $\e \in (0,1)$, we set
\begin{equation}\label{resource1}
  m(x) = \mep(x) =
  \begin{cases}
    1 / \e &\text{for}\; x \in (-\e,\e), \\
    0 &\text{for}\; x \in (-1,-\e) \cup (\e,1).
  \end{cases} 
\end{equation}
It follows that $\|\mep\|_{L^1(-1,1)} = 2$.
To derive a range of the diffusion coefficient for the existence of the solution $\udm$, we consider the following eigenvalue problem:
\begin{equation}\label{EVP-1dim}
  \begin{cases}
    \phi'' + \lambda \mep(x) \phi = 0 &(-1<x<1), \\
    \; \phi(-1) = \phi(1) = 0.
  \end{cases}
\end{equation}
If we find the least eigenpair $( \lambda_1(\mep), \phi_1 )$ of
\begin{equation}\label{EVP-1dim-sub}
  \begin{cases}
    \phi'' + \lambda \mep(x) \phi = 0 &(0<x<1), \\
    \; \phi'(0) = \phi(1) = 0,
  \end{cases}
\end{equation}
then $\lambda_1(\mep)$ is also the least eigenvalue of \eqref{EVP-1dim} and $\phi_1(x) \;(0<x\le1); \; \phi_1(-x) \;(-1<x<0)$ is the eigenfunction of \eqref{EVP-1dim} corresponding to $\lambda_1(\mep)$.

For $x \in (\e,1)$, by the boundary condition $\phi(1) = 0$, one can see that
\[ \phi(x) = A(x-1) \quad (\e < x < 1), \]
where $A$ is an arbitrary constant.
For $x \in (0,\e)$, using $\phi'(0) = 0$, we have
\[ \phi(x) = B \cos \sqrt{\frac{\lambda}{\e}}x \quad (0 < x < \e), \]
where $B$ is also an arbitrary constant.
Connecting these two functions at $x=\e$ for $C^1$-class, we have
\[ A(\e-1) = B \cos \sqrt{\lambda \e} \quad\text{and}\quad A = -B \sqrt{\frac{\lambda}{\e}} \sin \sqrt{\lambda \e}. \]
Hence, the eigenvalue problem \eqref{EVP-1dim-sub} has non-trivial solutions $\phi(x) \not\equiv 0$ if and only if
\begin{equation}\label{coeff-mat}
  \begin{bmatrix} \e-1 & -\cos \sqrt{\lambda \e} \\ 1 & \sqrt{\frac{\lambda}{\e}} \sin \sqrt{\lambda \e} \end{bmatrix}
  \begin{bmatrix} A \\ B \end{bmatrix}
  =
  \begin{bmatrix} 0 \\ 0 \end{bmatrix}
\end{equation}
have non-trivial solutions $(A,B) \neq (0,0)$.
That is, the determinant of the coefficient matrix of \eqref{coeff-mat} is zero:
\begin{equation}\label{det0}
  \tan \sqrt{\lambda \e} = \frac{1}{1-\e} \sqrt{ \frac{\e}{\lambda} }.
\end{equation}
Intersections of two graphs $f(\lambda) := \tan \sqrt{\lambda \e} \;\; (\lambda > 0)$ and $g(\lambda) := \sqrt{\e} / [(1-\e)\sqrt{\lambda}] \;\; (\lambda > 0)$ are the eigenvalues of \eqref{EVP-1dim-sub}.
Therefore, we obtain estimates of each eigenvalue:
\[ (k-1)^2 \frac{\pi^2}{\e} < \lambda_k(\mep) < \left( k-\frac{1}{2} \right)^2 \frac{\pi^2}{\e} \quad (k =1,2,\dots) . \]
More detailed estimates of the least eigenvalue $\lambda_1(\mep)$ are needed.

\begin{lemma}\label{LEV1dim}
  The least eigenvalue is estimated as follows:
  \begin{equation}\label{principalEV1}
    \frac{1}{\lambda_1(\mep;1)} \ge 1-\e,
  \end{equation}
  and
  \begin{equation}\label{limit-ev1}
    \lim_{\e \to 0} \lambda_1(\mep;1) = 1.
  \end{equation}
\end{lemma}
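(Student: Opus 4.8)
The plan is to exploit the transcendental characterization of $\lambda_1(\mep)$ derived just above, namely that $\lambda=\lambda_1(\mep;1)$ is the smallest positive root of \eqref{det0}. Putting $t=\sqrt{\lambda\e}\in(0,\pi/2)$, equation \eqref{det0} becomes
\[
  t\tan t=\frac{\e}{1-\e},
\]
and since $h(t):=t\tan t$ satisfies $h(0^+)=0$, $h((\pi/2)^-)=+\infty$ and $h'(t)=\tan t+t\sec^2 t>0$ on $(0,\pi/2)$, it is a strictly increasing bijection of $(0,\pi/2)$ onto $(0,\infty)$; hence the equation has a unique root $t_1=t_1(\e)$, and $\lambda_1(\mep;1)=t_1(\e)^2/\e$. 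Thus everything reduces to two elementary estimates on $t_1(\e)$.

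For \eqref{principalEV1} I would use the inequality $\tan t\ge t$ on $(0,\pi/2)$. This gives $t_1^2\le t_1\tan t_1=\e/(1-\e)$, hence $\lambda_1(\mep;1)=t_1^2/\e\le 1/(1-\e)$, which is precisely $1/\lambda_1(\mep;1)\ge 1-\e$.

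For \eqref{limit-ev1} I would first note that $t_1(\e)\to 0$ as $\e\to 0$, because $h(t_1(\e))=\e/(1-\e)\to 0$ and $h^{-1}$ is continuous at $0$. Consequently $\tan t_1(\e)/t_1(\e)\to 1$, so for every $\delta>0$ there is $\e_\delta>0$ with $\tan t_1\le(1+\delta)t_1$ whenever $\e<\e_\delta$; then $(1+\delta)t_1^2\ge t_1\tan t_1=\e/(1-\e)\ge\e$, i.e.\ $\lambda_1(\mep;1)=t_1^2/\e\ge 1/(1+\delta)$. Combining this with the upper bound $\lambda_1(\mep;1)\le 1/(1-\e)$ from the previous step yields $1/(1+\delta)\le\liminf_{\e\to0}\lambda_1(\mep;1)\le\limsup_{\e\to0}\lambda_1(\mep;1)\le 1$ for every $\delta>0$, and letting $\delta\to0$ proves $\lim_{\e\to0}\lambda_1(\mep;1)=1$. (Alternatively, using $\cos t\ge 1-t^2/2$ one gets the explicit two-sided bound $2/(2-\e)\le\lambda_1(\mep;1)\le 1/(1-\e)$ for $\e$ small, and concludes by squeezing.)

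There is no real obstacle in this lemma: both bounds follow from standard inequalities for $\tan$, and the only point needing a line of justification is $t_1(\e)\to 0$, which is immediate from the monotonicity of $h$. The genuinely delicate issue — keeping $\dep$ inside the admissible range $(0,1/\lambda_1(\mep))$ while constructing the sub-solutions $\underline{u}_\e$ — is a matter for the later steps and does not arise here.
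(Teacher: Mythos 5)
Your proof is correct and follows essentially the same route as the paper: both reduce to the transcendental equation \eqref{det0}, substitute $\theta=\sqrt{\lambda\e}$, and use $\tan\theta\ge\theta$ to get \eqref{principalEV1}. For the limit \eqref{limit-ev1} the paper compares $\theta_1^\e\tan\theta_1^\e=\e/(1-\e)$ against Maclaurin expansions to trap $\theta_1^\e$ between $\sqrt{\e}$ and $\sqrt{2\e}$ and then combines with \eqref{principalEV1}; your $\delta$-argument via $t_1(\e)\to0$ and $\tan t_1/t_1\to1$ is the same idea in slightly cleaner form and is equally valid.
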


In what follows, we use the notation that
\[ \lambda_1 = \lambda_1(m) = \lambda_1(m;n), \]
where $n$ is the dimension number.

\begin{proof}
  We know an inequality $\theta \le \tan \theta$ for $\theta \in (0, \pi/2)$.
  Hence, substituting $\theta = \sqrt{\lambda_1 \e}$ and employing \eqref{det0}, we obtain \eqref{principalEV1}.
  Applying the changing variables $\theta = \sqrt{\lambda \e}$ to \eqref{det0} yields
  \begin{equation}\label{least_ev1}
    \theta \tan \theta = \frac{\e}{1-\e} \quad (0 < \theta < \pi/2).
  \end{equation}
  Thanks to the Maclaurin series:
  \begin{align*}
    & \theta \tan \theta = \theta^2 + \frac{1}{3} \theta^4 + o(\theta^5) \quad (\theta \to 0), \\
    & \frac{\e}{1-\e} = \e + \e^2 + o(\e^2) \quad (\e \to 0),
  \end{align*}
  we have the following inequality for efficiently small $\e$:
  \[ \sqrt{\e} \tan \sqrt{\e} \le \theta_1^\e \tan \theta_1^\e = \frac{\e}{1-\e}, \]
  where $\theta_1^\e \in (0,\pi/2)$ is the unique solution of \eqref{least_ev1}.
  We obtain $\sqrt{\e} \le \theta_1^\e$ for small $\e$.
  On the other hand, we have inverse inequality
  \[ \sqrt{2\e} \tan \sqrt{2\e} \ge \theta_1^\e \tan \theta_1^\e = \frac{\e}{1-\e}, \]
  and so $\theta_1^\e \le \sqrt{2\e}$ for efficiently small $\e$.
  Consequently, $\sqrt{\e} \le \theta_1^\e = \sqrt{\lambda_1 \e} \le \sqrt{2\e}$ holds for small $\e$, and we derive
  \[ \frac{1}{2} \le \lambda_1(\e) \le 1 \quad \text{for efficiently small}\; \e. \]
  By the above inequality and \eqref{principalEV1}, the desired estimate \eqref{limit-ev1} is proved.
\end{proof}
Now we prove \eqref{result2} in Theorem 1.1.
Let the resource function $m(x)$ be \eqref{resource1}, then thanks to \eqref{principalEV1}, we can set the diffusive coefficient $d = \dep = \sqrt{\e} \in (0,1-\e)$.
If we find the solution $U_\ep(x)$ of
\begin{equation}\label{dirichlet-1dim-half}
  \begin{cases}
    \sqrt{\e}\, U'' + U(\mep(x)-U) = 0,\;\; U>0 &(0<x<1), \\
    \; U'(0) = U(1) = 0,
  \end{cases}
\end{equation}
then $\uep(x) := U_\e(x) \;(0 \le x \le 1); \;\; U_\ep(-x) \;(-1 \le x < 0)$ is the solution of \eqref{main_eq}.
Because of
\[ \frac{\int_{-1}^1 \uep \,dx}{\int_{-1}^1 \mep \,dx} = \frac{2\int_0^1 U_\ep(x) \,dx}{2} = \int_0^1 U_\e(x) \,dx, \]
Hence we only show $\lim_{\e \to 0} \int_0^1 U_\e(x) \,dx = 3$ to prove \eqref{result2}.
Let $\vep$ be the solution of the Neumann problem
\begin{equation}\label{neumann-1dim}
  \begin{cases}
    \sqrt{\e}\, v'' + v(\mep(x)-v) = 0,\;\; U>0 &(0<x<1), \\
    \; v'(0) = v'(1) = 0.
  \end{cases}
\end{equation}
This is introduced in \cite{BaiHeLi} as the maximizing sequence, that is, $\lim_{\e \to 0} \int_0^1 \vep(x) \,dx = 3$.
Furthermore, detailed estimates about $\vep$ are derived in \cite{Inoue}.
We define two functions $\overline{U}_\e$ and $\underline{U}_\e$ as follows:
\[ \overline{U}_\e(x) := \vep(x) \quad (0 \le x \le 1) \]
and
\[ \underline{U}_\e(x) := \left( 1-\e^{1/4} \right) \left( \vep(x) - \vep(1) \right) \quad (0 \le x \le 1). \]
It is easy to see that $\overline{U}_\e$ is a super-solution of \eqref{dirichlet-1dim-half}.

\begin{lemma}\label{subsol-1dim}
  For sufficiently small $\e \in (0,1)$, the function $\underline{U}_\e$ is a sub-solution of \eqref{dirichlet-1dim-half}.
\end{lemma}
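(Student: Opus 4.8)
The plan is to verify directly the three conditions that make $\underline{U}_\e$ a sub-solution of \eqref{dirichlet-1dim-half}: the boundary relation at $x=0$, the boundary relation at $x=1$, and the differential inequality on $(0,1)$. The boundary relations are immediate and in fact hold with equality: since $\vep'(0)=0$ we get $\underline{U}_\e'(0)=(1-\e^{1/4})\vep'(0)=0$, and $\underline{U}_\e(1)=(1-\e^{1/4})(\vep(1)-\vep(1))=0$; moreover $\underline{U}_\e\ge 0$ (and $\not\equiv 0$) on $[0,1]$ because $\vep$ is nonincreasing, as recorded below. For the differential inequality I would insert $\sqrt{\e}\,\vep''=-\vep(\mep-\vep)$ into $\underline{U}_\e''=(1-\e^{1/4})\vep''$ and expand; a short computation gives
\[
  \sqrt{\e}\,\underline{U}_\e''+\underline{U}_\e(\mep-\underline{U}_\e)
  =(1-\e^{1/4})\Big[\vep^2-\vep(1)\,\mep-(1-\e^{1/4})\big(\vep-\vep(1)\big)^2\Big],
\]
so, since $1-\e^{1/4}>0$ for $\e\in(0,1)$, it remains to show that for all small $\e$ and all $x\in(0,1)$
\[
  \e^{1/4}\big(\vep(x)-\vep(1)\big)^2+\vep(1)\big(2\vep(x)-\vep(1)\big)\ \ge\ \vep(1)\,\mep(x).
\]

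Before splitting into cases I would record a monotonicity property of $\vep$. Comparison with the constant super-solution $1/\e$ yields $0<\vep\le 1/\e$ on $[0,1]$; hence $\sqrt{\e}\,\vep''=\vep(\vep-1/\e)\le0$ on $(0,\e)$ and $\sqrt{\e}\,\vep''=\vep^2\ge0$ on $(\e,1)$, so $\vep$ is concave on $(0,\e)$, convex on $(\e,1)$, and, together with $\vep'(0)=\vep'(1)=0$, nonincreasing on $[0,1]$. In particular $\vep(x)\ge\vep(1)$ for every $x$, and $\vep(x)\ge\vep(\e)$ for $x\in(0,\e]$.

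On the interval $(\e,1)$, where $\mep\equiv0$, the required inequality is clear: the first term is nonnegative and $\vep(1)(2\vep(x)-\vep(1))\ge\vep(1)(2\vep(1)-\vep(1))=\vep(1)^2>0$. On $(0,\e)$, where $\mep\equiv1/\e$, the inequality becomes $\e^{1/4}(\vep(x)-\vep(1))^2\ge\vep(1)(1/\e-2\vep(x)+\vep(1))$; bounding the right-hand side crudely by $\vep(1)(1/\e+\vep(1))$ and using $\vep(x)\ge\vep(\e)$, it suffices to check
\[
  \e^{1/4}\big(\vep(\e)-\vep(1)\big)^2\ \ge\ \vep(1)\Big(\frac{1}{\e}+\vep(1)\Big).
\]
Here I would invoke the quantitative analysis of the maximizing sequence in \cite{Inoue} (cf. \cite{BaiHeLi}): the peak value $\vep(\e)$ is of order $1/\e$, say $\vep(\e)\ge c_0/\e$, and the boundary value satisfies $\vep(1)\le C_0\sqrt{\e}$, for suitable constants $c_0,C_0>0$ and all small $\e$ (even the cruder pointwise bound $\vep\ge 1/(2\e)$ on $(0,\e)$, which makes the right-hand side of the reduced inequality negative, would suffice). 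With such bounds the left-hand side is of order $\e^{1/4}\cdot\e^{-2}=\e^{-7/4}$ and the right-hand side of order at most $\e^{-1/2}$, so the inequality holds for all sufficiently small $\e$, completing the verification.

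The main obstacle is precisely this last case: on the peak interval $(0,\e)$ the ``bad'' term $\vep(1)\,\mep=\vep(1)/\e$ is not small, and to control it one must know that $\vep$ is still large enough near the right end $x=\e$ --- of order $1/\e$, or at least $\gg\e^{-3/8}$ --- so that the favorable quadratic term $\e^{1/4}(\vep-\vep(1))^2$ dominates; this is exactly where the fine asymptotics of $\vep$ from \cite{Inoue} enter, and where the construction departs from the Neumann case. Note also that the two ingredients of $\underline{U}_\e$, the subtraction of $\vep(1)$ (which enforces the Dirichlet condition at $x=1$) and the damping factor $1-\e^{1/4}$ (which produces the good term $\e^{1/4}(\vep-\vep(1))^2$), are tailored to the restriction $d=\dep=\sqrt{\e}\in(0,1/\lambda_1(\mep))$ coming from \eqref{principalEV1}.
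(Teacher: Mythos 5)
Your proposal follows essentially the same route as the paper: verify the boundary relations, substitute $\sqrt{\e}\,\vep''=-\vep(\mep-\vep)$, and split into the peak interval $(0,\e)$ and the outer interval $(\e,1)$, using the monotonicity of $\vep$ together with the quantitative estimates of $\vep$ on $[0,\e]$ and of $\vep(1)$ from \cite{Inoue}. One quantitative slip: the peak value is \emph{not} of order $1/\e$ --- the estimate the paper actually imports from \cite{Inoue} is $\sqrt{\e}\,\vep(x)\to \tfrac{3}{2}$ on $[0,\e]$, so $\vep(\e)\sim\tfrac{3}{2}\e^{-1/2}$, and the cruder bound $\vep\ge 1/(2\e)$ on $(0,\e)$ is likewise false. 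This does not break your argument, because you correctly identify that any lower bound $\gg\e^{-3/8}$ suffices (then $\e^{1/4}\vep(\e)^2\sim\e^{-3/4}$ dominates $\vep(1)/\e\sim\e^{-1/2}$), which is exactly how the paper closes the case $x\in(0,\e)$.
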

\begin{proof}
  First of all, the boundary condition is satisfied:
  \[ \underline{U}'_\e(0) = \left( 1-\e^{1/4} \right) v'_\e(0) = 0 \quad \text{and} \quad \underline{U}_\e(1) = 0. \]
  For any $x \in (0,1)$, we have
  \begin{align*}
    & \sqrt{\e}\, \underline{U}''_\e(x) + \underline{U}_\e(x) \left( \mep(x) - \underline{U}_\e(x) \right) \\
    &= \left( 1-\e^{1/4} \right) \left[ \sqrt{\e} v''_\e(x) + (\vep(x)-\vep(1)) \left( \mep(x) - \left( 1-\e^{1/4} \right) (\vep(x)-\vep(1)) \right) \right].
  \end{align*}
  Substituting $\sqrt{\e} v''_\e = - \vep(\mep - \vep)$, we obtain
  \begin{align*}
    & \sqrt{\e}\, \underline{U}''_\e + \underline{U}_\e \left( \mep - \underline{U}_\e \right) \\
    &= \left( 1-\e^{1/4} \right) \left[ \e^{1/4}\vep^2 + \vep(1) \left( 2(1-\e^{1/4}) \vep - \mep - (1-\e^{1/4}) \vep(1) \right) \right].
  \end{align*}
  From \cite{Inoue}, we use some estimates of $\vep(x)$ as follows:
  \begin{align}
    & \lim_{\e \to 0} \frac{\vep(1)}{\sqrt{\e}} = C_1 \;(\text{some positive constant}), \label{I1} \\
    & \lim_{\e \to 0} \sqrt{\e}\,\vep(x) = \frac{3}{2} \;\;\text{for any}\; x \in [0,\e]. \label{I2}
  \end{align}
  We note that $\vep(x)$ is a monotonically decreasing function with respect to $x$.
  Due to $\mep(x) = 1/\e$ for $x \in (0,\e)$, if $\e$ is sufficiently small, then we have
  \begin{align*}
    & \sqrt{\e}\, \underline{U}''_\e + \underline{U}_\e \left( \mep - \underline{U}_\e \right) \\
    &= \left( 1-\e^{1/4} \right) \left[ \e^{1/4}\vep^2 + \vep(1) \left( 2(1-\e^{1/4}) \vep - \frac{1}{\e} - (1-\e^{1/4}) \vep(1) \right) \right] \\
    &\ge \left( 1-\e^{1/4} \right) \left( \e^{1/4}\vep^2 - \frac{\vep(1)}{\e} \right) > 0
  \end{align*}
  for $x \in (0,\e)$.
  Here, we used \eqref{I1}, \eqref{I2}, and the monotonicity of $\vep(x)$.
  Next, for $x \in (\e,1)$, one can see that
  \begin{align*}
    & \sqrt{\e}\, \underline{U}''_\e + \underline{U}_\e \left( \mep - \underline{U}_\e \right) \\
    &= \left( 1-\e^{1/4} \right) \left[ \e^{1/4}\vep^2 + \vep(1) \left( 2(1-\e^{1/4}) \vep - (1-\e^{1/4}) \vep(1) \right) \right] \\
    &\ge \left( 1-\e^{1/4} \right) \left[ \e^{1/4}\vep^2 + (1-\e^{1/4}) \vep(1)^2 \right] > 0,
  \end{align*}
  where we again used the monotonicity of $\vep(x)$.
  Summarizing, we obtain
  \[ \sqrt{\e}\, \underline{U}''_\e + \underline{U}_\e \left( \mep - \underline{U}_\e \right) > 0 \quad (0<x<1) \]
  for sufficiently small $\e$.
\end{proof}

Therefore, we have the inequality
\[ \underline{U}_\e(x) \le U_\e(x) \le \overline{U}_\e(x) \quad (0 \le x \le 1), \]
and so, integrating each side, we have
\[ \left( 1-\e^{1/4} \right) \left( \int_0^1 \vep(x) \,dx - \vep(1) \right) \le \int_0^1 U_\e(x) \,dx \le \int_0^1 \vep(x) \,dx. \]
Remembering $\lim_{\e \to 0} \int_0^1 \vep \,dx = 3$ and using again \eqref{I1}, we obtain $\lim_{\e \to 0} \int_0^1 U_\e \,dx = 3$.
Finally, we prove Theorem \ref{main_thm1}.

\section{Proof of higher-dimensional results}

Hereafter we consider \eqref{main_eq} in the case where $\Omega$ is the multi-dimensional unit ball $B_0(1)$ with $n \ge 2$.
For any $\e \in (0,1)$, we set the resource function as follows:
\begin{equation}\label{resource2}
  m(x) = \mep(x) =
  \begin{cases}
    1 / \e^n &\text{for}\; x \in \overline{B_0(\e)}, \\
    0 &\text{for}\; x \in \overline{B_0(1)} \setminus \overline{B_0(\e)}.
  \end{cases} 
\end{equation}
It follows that $\|\mep\|_{L^1(B_0(1))}=|B_0(1)|$, where $|B_0(1)|$ denotes the volume of $B_0(1)$.

First of all, we derive an estimate about the least eigenvalue $\lambda_1(\mep)$ for \eqref{EVP}.
The characterization of the least eigenvalue for \eqref{EVP} is well-known, that is,
\[ \lambda_1(m) = \inf_{0 \neq \phi \in H_0^1(\Omega)} \frac{\int_{\Omega} |\nabla \phi|^2 \,dx}{\int_{\Omega} m\phi^2 \,dx}. \]
We substitute 
\[
  \phi(x) =
  \begin{cases}
    -|x|+\e &\text{for}\; x \in \overline{B_0(\e)}, \\
    0 &\text{for}\; x \in \overline{B_0(1)} \setminus \overline{B_0(\e)}.
  \end{cases}
\]
as a test function in $H_0^1 (\Omega)$.
Then straightforward calculations yield
\[ \lambda_1(\mep) \le \e^n \frac{ \int_0^1 \phi_r^2 r^{n-1} \,dr }{ \int_0^\e \phi^2 r^{n-1} \,dr } = \e^n \frac{ \int_0^\e (-1)^2 r^{n-1} \,dr }{ \int_0^\e (-r+\e)^2 r^{n-1} \,dr } = \frac{(n+1)(n+2)}{2} \e^{n-2}, \]
and so we have the following estimate
\begin{equation}\label{principalEV2}
  \frac{1}{\lambda_1(\mep;n)} \ge \frac{2}{(n+1)(n+2)\e^{n-2}}.
\end{equation}

\begin{theorem}\label{subsol-2dim}
  Assume that the dimension number $n$ satisfies $n \ge 2$ and the domain satisfies $\Omega = B_0(1)$.
  Let the resource function $m(x)$ be defined in \eqref{resource2}.
  Then there exist positive constants $c_1$ and $c_2$ depending only on $n$ such that the solution $\uep(x)$ of
  \begin{equation}\label{highdim-eq}
    \begin{cases}
      \frac{c_1}{\e^{n-2}} \Delta u + u (\mep(x)-u) = 0, \;\; u>0 &\text{in}\;\; B_0(1),\\
      \; u = 0 &\text{on}\;\; \partial B_0(1)
    \end{cases}
  \end{equation}
  satisfies
  \[ \frac{ \|\uep\|_{L^1(B_0(1))} }{ \|\mep\|_{L^1(B_0(1))} } \ge c_2 \left( \frac{n}{e} |\log \e| + 1 - \frac{2}{e} \right) \]
  for any sufficiently small $\e \in (0,1)$.
\end{theorem}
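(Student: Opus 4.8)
# Proof Proposal for Theorem 3.2

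\textbf{Overall strategy.} The plan is to mimic the one-dimensional argument of Section 2: rescale the diffusion coefficient so that $d=c_1\e^{-(n-2)}$ stays below $1/\lambda_1(\mep;n)$, exhibit an explicit sub-solution $\underline{u}_\e$ built from the solution of a companion problem on the ball $B_0(\e)$ with constant resource $1/\e^n$, and then estimate $\|\underline{u}_\e\|_{L^1}$ from below. The key point making the blow-up possible is that on the small ball the equation is essentially the logistic problem with a large constant coefficient, whose solution is close to the constant $1/\e^n$ on most of $B_0(\e)$; outside $B_0(\e)$ the resource vanishes, so the sub-solution must be taken harmonic (radially, $\propto |x|^{2-n}$ for $n\ge 3$ and $\propto \log|x|$ for $n=2$), and it is precisely the $\log$ behaviour that produces the $|\log\e|$ factor. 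Throughout I would pass to radial coordinates, writing $u=u(r)$ and using $\Delta u = u_{rr}+\frac{n-1}{r}u_r$.

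\textbf{Construction of the sub-solution.} Let $w(s)$ solve the fixed rescaled logistic problem $c_1 w'' + \tfrac{(n-1)c_1}{s}w' + w(1-w)=0$ on $B_0(1)$ with $w=0$ on $\partial B_0(1)$, $w>0$; here $c_1$ is chosen small enough (depending only on $n$) that this problem has a solution, i.e. $c_1 < 1/\lambda_1(\mathbf{1}_{B_0(1)};n)$, and normalized so that $\int_{B_0(1)} w\,dx$ is a fixed positive number. Undoing the scaling $x\mapsto x/\e$ and $u\mapsto u/\e^n$, the function $W_\e(x):=\e^{-n}w(x/\e)$ solves $\frac{c_1}{\e^{n-2}}\Delta W_\e + W_\e(\e^{-n}-W_\e)=0$ on $B_0(\e)$. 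Outside $B_0(\e)$ I would glue on the unique radial harmonic function that vanishes on $\partial B_0(1)$ and matches $W_\e$ at $r=\e$: for $n\ge 3$ this is a multiple of $(r^{2-n}-1)$, for $n=2$ a multiple of $(-\log r)$. Call the glued function $\underline{u}_\e$. Because $\mep\equiv 0$ on $B_0(1)\setminus B_0(\e)$ and the harmonic piece is nonincreasing in $r$, we have $\Delta\underline{u}_\e=0\ge -\underline{u}_\e(\mep-\underline{u}_\e)=\underline{u}_\e^2\cdot(-1)$... one must be careful with the sign here, so I would instead multiply the harmonic piece by a factor $(1-\e^{\gamma})<1$ as in Lemma 2.4, so that the strict sub-solution inequality $\frac{c_1}{\e^{n-2}}\Delta\underline{u}_\e+\underline{u}_\e(\mep-\underline{u}_\e)>0$ holds on both regions, the inner region being handled exactly as in the proof of Lemma 2.4 (the $\e^\gamma \underline{u}_\e^2$ term dominates the correction coming from $\underline{u}_\e(\e)>0$). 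The constant zero function is a trivial sub-solution to compare against for positivity, and the Neumann solution $W_\e$ extended by a constant, or simply $\|\mep\|_{L^\infty}=\e^{-n}$, serves as a super-solution, so Proposition 1.1's machinery (sub-super solution method) yields a solution $\uep$ with $\underline{u}_\e\le \uep$.

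\textbf{The $L^1$ estimate and the main obstacle.} Given $\underline{u}_\e\le\uep$, it remains to compute $\int_{B_0(1)}\underline{u}_\e\,dx$. The inner contribution is $\int_{B_0(\e)}W_\e\,dx=\e^{-n}\int_{B_0(\e)}w(x/\e)\,dx=\e^{-n}\cdot\e^n\int_{B_0(1)}w\,dx=$ a fixed constant times... wait, this is only $O(1)$, not growing. The growth must come from the outer harmonic piece: its height at $r=\e$ is comparable to $W_\e(\e)=\e^{-n}w(1)$, which is enormous, and for $n=2$ its integral is $\bigl(\text{const}\cdot\e^{-2}\bigr)\int_\e^1(-\log r)\,2\pi r\,dr\sim \text{const}\cdot\e^{-2}\cdot$... this vanishes too. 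So the correct normalization is subtler: one should \emph{not} fix $\int w$, but rather track that $w(s)\to 1$ pointwise on compact subsets of $B_0(1)$ as $c_1\to 0$, whence $W_\e(\e)\approx \e^{-n}$ only near the boundary layer — the real matching value is $W_\e$ at a radius slightly inside $\e$ where $w$ is still $\approx 1$, giving matching height $\approx \e^{-n}$, and then the outer integral for $n=2$ is $\approx \e^{-2}\e^{2}|\log\e|=|\log\e|$ after the harmonic normalization $(-\log r)/(-\log \e)\cdot\e^{-n}w(\cdot)$ is inserted. Carrying this bookkeeping correctly — choosing $w$ to be $c_1$-dependent and extracting the exact constants $\tfrac{n}{e}$ and $1-\tfrac{2}{e}$, which suggests the optimal cut-off radius is $\e/e^{1/n}$ or similar, and where $\|\mep\|_{L^1}=|B_0(1)|$ in the denominator — is the main obstacle; it is essentially the computation that in the $n=2$ case $\int_{B_0(1)}\underline{u}_\e/|B_0(1)|\ge c_2(\tfrac{2}{e}|\log\e|+1-\tfrac{2}{e})$, and for general $n$ the $n^{2-n}$-type scaling of the harmonic kernel conspires with the $\e^{n-2}$ in the diffusion coefficient to leave exactly a $|\log\e|$ growth with coefficient $\tfrac{n}{e}$. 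I would isolate this as a separate lemma and verify it by direct radial integration, using the explicit form of the harmonic pieces and the asymptotics $w\to 1$; the appendix's eigenvalue estimate confirms $c_1\e^{-(n-2)}<1/\lambda_1(\mep;n)$ once $c_1<2/[(n+1)(n+2)]$ via \eqref{principalEV2}.
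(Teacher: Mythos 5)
Your overall framework (set $d=c_1\e^{-(n-2)}$, check $d<1/\lambda_1(\mep;n)$ via \eqref{principalEV2}, take $\overline{u}_\e\equiv\e^{-n}$ as super-solution, and build an explicit radial sub-solution whose outer tail carries the $|\log\e|$ mass) matches the paper's strategy, but the construction of the sub-solution has a genuine gap that your own text flags and never resolves. In the annulus $\e<|x|<1$ one has $\mep\equiv 0$, so the sub-solution inequality reads $\frac{c_1}{\e^{n-2}}\Delta\underline{u}_\e-\underline{u}_\e^2\ge 0$. A positive harmonic tail gives $\Delta\underline{u}_\e=0$ there, and multiplying by $(1-\e^\gamma)$ does not help because harmonicity is preserved under scalar multiplication: the left-hand side is still $-\underline{u}_\e^2<0$ wherever $\underline{u}_\e>0$. (The analogous trick worked in Lemma \ref{subsol-1dim} only because $\vep$ is strictly convex where $\mep=0$, not harmonic.) Moreover, even setting aside the sign problem, the harmonic tail does not produce the claimed growth: for $n\ge 3$ the profile $A(r^{2-n}-1)$ with matching height $h$ at $r=\e$ has $A\sim h\e^{n-2}$ and $\int_\e^1(r^{2-n}-1)r^{n-1}\,dr=O(1)$, and for $n=2$ you computed yourself that $\int_\e^1(-\log r)\,r\,dr=O(1)$; no $|\log\e|$ appears. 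Your closing paragraph acknowledges this as ``the main obstacle'' and gestures at a rescaling that ``conspires'' to give $\frac{n}{e}|\log\e|$, but no such bookkeeping is carried out, and with a harmonic tail it cannot be.

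The paper's key idea, which is absent from your proposal, is to take a \emph{non-harmonic}, strictly subharmonic tail: $\underline{u}_\e(r)=\frac{c_2}{er^n}-\frac{c_2}{e}$ on $\e<r\le 1$ (and $\frac{c_2}{\e^n}e^{-r^n/\e^n}-\frac{c_2}{e}$ on $0\le r\le\e$, rather than an abstract rescaled logistic profile $w$). Since $\Delta(r^{-n})=2n\,r^{-n-2}>0$, the term $\frac{c_1}{\e^{n-2}}\Delta\underline{u}_\e$ is a large positive quantity that dominates $\underline{u}_\e^2$ under the explicit parameter constraint $2nec_1\ge c_2$ (together with $1-2n(n-1)c_1>ec_2$ and $c_2<1$ for the inner region and the ordering $\underline{u}_\e\le\overline{u}_\e$). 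Simultaneously, this same profile yields $\int_\e^1 r^{-n}\,r^{n-1}\,dr=|\log\e|$, which is exactly where the factor $\frac{n}{e}|\log\e|+1-\frac{2}{e}$ comes from in every dimension $n\ge 2$. Without this (or an equivalent) choice of tail, both the verification of the differential inequality and the $L^1$ lower bound fail, so the proposal does not yet constitute a proof.
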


By setting $\e \to 0$, Theorem \ref{subsol-2dim} immediately leads to Theorem \ref{main_thm2}.

\begin{proof}
We set the diffusive coefficient as
\[
  d = \dep := \frac{c_1}{\e^{n-2}},
\]
where $c_1$ is a positive constant independent of $\e$ and will be defined later.
Employing \eqref{principalEV2}, we impose
\begin{equation}\label{C1}
  c_1 < \frac{2}{(n+1)(n+2)} \tag{C1}
\end{equation}
to ensure the existence of the solution $\uep$ for \eqref{highdim-eq}.

We introduce two functions $\overline{u}_\e(x)$ and $\underline{u}_\e(x)$ defined over $\overline{B_0(1)}$ as 
\begin{equation}\label{super_sol}
  \overline{u}_\e(x) := \frac{1}{\e^{n}} \quad \text{for}\;\; x \in \overline{B_0(1)}
\end{equation}
and
\begin{equation}\label{sub_sol}
  \underline{u}_\e(x) :=
  \begin{cases}
    \; \dfrac{c_2}{\e^n} e^{-|x|^n / \e^n} - \dfrac{c_2}{e} &\text{for}\;\; x \in \overline{B_0(\e)}, \vspace{1mm}\\
    \; \dfrac{c_2}{e|x|^n} - \dfrac{c_2}{e} &\text{for}\;\; x \in \overline{B_0(1)} \setminus \overline{B_0(\e)},
  \end{cases}
\end{equation}
where a positive constant $c_2$ will be determined later independently of $\e$.
We will verify that \eqref{super_sol} is a super-solution for \eqref{highdim-eq} and \eqref{sub_sol} is a sub-solution, respectively.
The super-solution \eqref{super_sol} is introduced in \cite{InoueKuto}.
The constant $1/\e^n$ is derived from the maximum value of $\mep(x)$.
In \cite{InoueKuto}, a sub-solution for the Neumann boundary condition is introduced as
\[
  V_\e(x) :=
  \begin{cases}
    \; \dfrac{c_2}{\e^n} e^{-|x|^n / \e^n} &\text{for}\;\; x \in \overline{B_0(\e)}, \vspace{1mm}\\
    \; \dfrac{c_2}{e|x|^n} &\text{for}\;\; x \in \overline{B_0(1)} \setminus \overline{B_0(\e)}.
  \end{cases}
\]
We modified $V_\e(x)$ to \eqref{sub_sol} for satisfying the Dirichlet boundary condition.

The following procedure is the same as the proof of \cite[Theorem 2.2]{InoueKuto}.
First of all, it is easily verified that $\overline{u}_\e(x)$ satisfies
\[
  \frac{c_1}{\e^{n-2}} \Delta \overline{u}_\e + \overline{u}_\e (\mep(x) - \overline{u}_\e) =
  \begin{cases}
    \; 0 &\text{for}\;\; x \in B_0(1), \\
    - \overline{u}_\e^{2} < 0 &\text{for}\;\; x \in B_0(1) \setminus \overline{B_0(\e)},
  \end{cases}
\]
and $\overline{u}_\e > 0 \;\text{on}\; \partial B_0(1)$.
The above inequalities show that $\overline{u}_\e(x)$ is a super-solution for \eqref{highdim-eq}.

In the next, we find the range of parameters $c_1,\, c_2$, so that $\underline{u}_\e(x)$ will be a sub-solution for \eqref{highdim-eq}.
Thanks to the fact that $\underline{u}_\e \in C^2(\overline{B_0(1)} \setminus \{ |x|=\e \}) \cap C^1(\overline{B_0(1)})$, it suffices to show that
\begin{equation}\label{sub_sol_diff}
  \begin{cases}
    \frac{c_1}{\e^{n-2}} \left( \underline{u}_\e'' + \sfrac{n-1}{r} \underline{u}_\e' \right) + \underline{u}_\e (\mep(r) - \underline{u}_\e) \ge 0 &\text{for}\;\; r \in (0,\e) \cup (\e,1),\\
    \; \underline{u}_\e'(0) = 0,\quad \underline{u}_\e(1) \le 0. &
  \end{cases}
\end{equation}
Since $\underline{u}_\e(x)$ and $\mep(x)$ are radial functions, it is not confusing that we use the same notation $\underline{u}_\e(r) = \underline{u}_\e(x)$, $\mep(r) = \mep(x)$ for $r = |x| \in [0,1]$, that is,
\[
  \underline{u}_\e(r) =
  \begin{cases}
    \; \dfrac{c_2}{\e^n} e^{-r^n / \e^n} - \dfrac{c_2}{e} &(0 \leq r \leq \e), \vspace{1mm} \\
    \; \dfrac{c_2}{e r^n} - \dfrac{c_2}{e} &(\e < r \leq 1),
  \end{cases} \qquad
  \mep(r) =
  \begin{cases}
    1/\e^n & (0 \leq r \leq \e), \\
    \; 0 & (\e < r \leq 1).
  \end{cases} 
\]
Note also that the prime symbol $'$ represents the derivative by $r$.
We prepare the derivatives of $\underline{u}_\e(r)$:
\[
  \underline{u}_\e'(r) =
    \begin{cases}
      \; - \dfrac{c_2 n r^{n-1}}{\e^{2n}} e^{-r^n / \e^n} &(0 \leq r \leq \e), \vspace{1mm}\\
      \; - \dfrac{c_2 n}{er^{n+1}} &(\e < r \leq 1)
    \end{cases}
\]
and
\[
  \underline{u}_\e''(r) =
    \begin{cases}
      \; \dfrac{c_2 n(n-1) r^{n-2}}{\e^{2n}} \biggl( \dfrac{n r^n}{(n-1)\e^n}-1 \biggr) e^{-r^n / \e^n} &(0 \leq r \leq \e), \vspace{1mm}\\
      \; \dfrac{c_2 n(n+1)}{e r^{n+2}} &(\e < r \leq 1).
    \end{cases}
\]
It can be easily seen that $\underline{u}_\e'(0) = 0$ and $\underline{u}_\e(1) = 0$.
Now we suppose that
\begin{equation}\label{C2}
  1-2n(n-1)c_1 > ec_2. \tag{C2}
\end{equation}
The reason why we imposed \eqref{C2} will be seen in the following calculation.
For $r \in (0,\e)$, we have
\begin{align*}
  & \frac{c_1}{\e^{n-2}} \left( \underline{u}_\e'' + \frac{n-1}{r} \underline{u}_\e' \right) + \underline{u}_\e \left( \frac{1}{\e^n} - \underline{u}_\e \right) \\
  &= \left( \frac{c_1c_2n^2}{\e^{4n-2}}r^{2n-2} - \frac{2c_1c_2n(n-1)}{\e^{3n-2}}r^{n-2} + \frac{c_2^2}{\e^{2n}} + \frac{2c_2^2}{e \e^n} \right) e^{-r^n/\e^n} \\
  &\quad \quad - \frac{c_2^2}{\e^{2n}} e^{-2r^n/\e^n} - \frac{c_2^2}{e^2} - \frac{c_2}{e \e^n} \\
  &\ge \left( \frac{c_1c_2n^2}{\e^{4n-2}} \cdot 0 - \frac{2c_1c_2n(n-1)}{\e^{3n-2}} \e^{n-2} + \frac{c_2}{\e^{2n}} + \frac{2c_2^2}{e \e^n} \right) e^{-r^n/\e^n} - \frac{c_2^2}{\e^{2n}} \cdot 1 - \frac{c_2^2}{e^2} - \frac{c_2}{e \e^n} \\
  &= c_2 \left[ \left( \frac{1-2c_1n(n-1)}{\e^{2n}} + \frac{2c_2}{e \e^n} \right) e^{-r^n/\e^n} - \frac{c_2}{\e^{2n}} - \frac{1}{e \e^n} - \frac{c_2}{e^2} \right].
\end{align*}
Thanks to \eqref{C2}, the coefficient of $e^{-r^n/\e^n}$ is positive.
Continuing, we can see that
\begin{align*}
  &\ge c_2 \left[ \left( \frac{1-2c_1n(n-1)}{\e^{2n}} + \frac{2c_2}{e \e^n} \right) e^{-1} - \frac{c_2}{\e^{2n}} - \frac{1}{e \e^n} - \frac{c_2}{e^2} \right] \\
  &= c_2 \left[ \left( \frac{1-2c_1n(n-1)}{e}-c_2 \right) \frac{1}{\e^{2n}} + \left( \frac{2c_2}{e^2} - \frac{1}{e} \right) \frac{1}{\e^n} - \frac{c_2}{e^2} \right].
\end{align*}
By virtue of \eqref{C2} again, the coefficient of the main term $O(1/\e^{2n})$ is positive.
Thus, the differential inequality \eqref{sub_sol_diff} holds in the interval $(0,\e)$ if $\e \in (0,1)$ is sufficiently small.
To prove \eqref{sub_sol_diff} in the interval $(\e,1)$, we impose a condition on the parameters $c_1, c_2$, that is,
\begin{equation}\label{C3}
  2nec_1-c_2 \ge 0. \tag{C3}
\end{equation}
For $r \in (\e,1)$, we obtain
\begin{align*}
  \frac{c_1}{\e^{n-2}} \left( \underline{u}_\e'' + \frac{n-1}{r} \underline{u}_\e' \right) - \underline{u}_\e^2 &= \frac{c_2}{e r^{n+2}} \left( \frac{2nc_1}{\e^{n-2}} - \frac{c_2}{er^{n-2}} \right) + \frac{2c_2^2}{e^2 r^n} - \frac{c_2^2}{e^2} \\
  &\ge \frac{c_2}{e r^{n+2}} \left( \frac{2nc_1}{\e^{n-2}} - \frac{c_2}{e \e^{n-2}} \right) + \frac{2c_2^2}{e^2 \cdot 1} - \frac{c_2^2}{e^2} \\
  &= \frac{c_2}{e \e^{n-2} r^{n+2}} \left(2nc_1 - \frac{c_2}{e} \right) + \frac{c_2^2}{e^2} \\
  &\ge 0,
\end{align*}
where we used \eqref{C3} in the last inequality.
Therefore, $\underline{u}_\e(x)$ defined by \eqref{sub_sol} is a sub-solution for \eqref{highdim-eq}.

Finally, we impose the following:
\begin{equation}\label{C4}
  c_2 < 1 \tag{C4},
\end{equation}
because we need the inequality $(0<\,)\, \underline{u}_\e \le \overline{u}_\e$ over $\overline{B_0(1)}$.
Three conditions \eqref{C2}, \eqref{C3}, and \eqref{C4} are the same as in the case of the Neumann boundary condition \cite{InoueKuto}.
The condition \eqref{C1} is new deriving from the existence of the solution $\uep(x)$.
Here, we introduce the same notation as in \cite{InoueKuto}, that is,
\[ T := \{\, (c_1, c_2) \in \R_{>0}^2 \mid (c_1,c_2) \text{\ satisfies \eqref{C2}, \eqref{C3}, and \eqref{C4}} \,\}. \]
This set forms a triangle in the $(c_1,c_2)$ plane whose vertices are
\[ (c_1,c_2) = (0,0),\, \left( \frac{1}{2n(e^2+n-1)}, \frac{e}{e^2+n-1} \right),\, \left( \frac{1}{2n(n-1)}, 0 \right). \]
In addition, by virtue of \eqref{C1}, we define the followings:
\begin{align*}
  & T_2 := T \cap \{ c_1 < 1/6 \} & \text{in the case where}\; n=2, \\
  & T_n := T, &\text{in the case where}\; n\ge3.
\end{align*}
Therefore, if $n=2$, then $T_n$ forms a quadrilateral, else if $n \ge 3$, then $T_n$ forms a triangle.
Consequently, we obtain the following lemma:

\begin{lemma}\label{sub-super-thm}
  Let $(c_1,c_2) \in T_n$ and $\overline{u}_\e(x), \underline{u}_\e(x)$ be defined by \eqref{super_sol},\; \eqref{sub_sol} respectively. Then the solution $\uep(x)$ of \eqref{highdim-eq} is estimated as the following inequality
  \[ \underline{u}_\e(x) \le \uep(x) \leq \overline{u}_\e(x) \quad \text{for any}\; x \in B_0(1), \]
  if $\e$ is sufficiently small.
\end{lemma}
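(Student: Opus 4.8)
The plan is to run the standard sub-super solution argument for the Dirichlet problem \eqref{highdim-eq} with the pair $(\underline{u}_\e, \overline{u}_\e)$ constructed above. Almost all of the needed verifications have already been carried out in the discussion preceding the lemma: $\overline{u}_\e$ from \eqref{super_sol} is a super-solution of \eqref{highdim-eq}, and, under the conditions \eqref{C2} and \eqref{C3}, the function $\underline{u}_\e$ from \eqref{sub_sol} satisfies the differential inequality \eqref{sub_sol_diff} on $(0,\e)\cup(\e,1)$ together with $\underline{u}_\e'(0)=0$ and $\underline{u}_\e(1)=0$. Since $\underline{u}_\e \in C^1(\overline{B_0(1)})$ — the exponential piece and the $|x|^{-n}$ piece of \eqref{sub_sol} were matched in value and in radial derivative at $|x|=\e$ — this makes $\underline{u}_\e$ an admissible weak sub-solution. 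So the first step is simply to record these two facts.

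Next I would check the ordering hypotheses of the method. On $\partial B_0(1)$ the Dirichlet data of \eqref{highdim-eq} is $0$, and there $\underline{u}_\e = 0$ while $\overline{u}_\e = \e^{-n} > 0$, so the boundary values are correctly sandwiched. In the interior, condition \eqref{C4} ($c_2<1$) yields $0 < \underline{u}_\e \le \overline{u}_\e$ on $\overline{B_0(1)}$: on $\overline{B_0(\e)}$ one has $\underline{u}_\e \le c_2\e^{-n} \le \e^{-n}=\overline{u}_\e$, on $\overline{B_0(1)}\setminus\overline{B_0(\e)}$ one has $\underline{u}_\e \le \frac{c_2}{e\e^n} \le \e^{-n}=\overline{u}_\e$, and positivity of $\underline{u}_\e$ on $B_0(1)$ is read off the explicit formula in \eqref{sub_sol} together with its monotonicity in $r=|x|$.

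With an ordered sub-super pair in hand, the sub-super solution method (see \cite{Du, InoueKuto}) produces a solution $\tilde u$ of \eqref{highdim-eq} with $\underline{u}_\e \le \tilde u \le \overline{u}_\e$ on $\overline{B_0(1)}$; in particular $\tilde u > 0$ in $B_0(1)$ because $\underline{u}_\e>0$ there. Condition \eqref{C1} was imposed precisely so that $\dep = \frac{c_1}{\e^{n-2}} < 1/\lambda_1(\mep)$ through \eqref{principalEV2}; hence Proposition \ref{exist-sol} applies and the positive solution of \eqref{highdim-eq} is unique, so $\tilde u = \uep$. This gives the asserted inequality $\underline{u}_\e(x) \le \uep(x) \le \overline{u}_\e(x)$ on $\overline{B_0(1)}$, and in particular on $B_0(1)$, for all sufficiently small $\e$.

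The argument is essentially bookkeeping and I do not foresee a genuine obstacle; the only point worth flagging is that $\underline{u}_\e$ is merely $C^1$, not $C^2$, across the sphere $|x|=\e$, so one must work with the weak notion of sub-solution. But this is exactly the notion under which \eqref{sub_sol_diff} was verified, and the $C^1$-matching at $|x|=\e$ is what ensures that no spurious positive singular term appears in $\Delta\underline{u}_\e$ across the interface. Once this is noted, the sub-super comparison goes through verbatim.
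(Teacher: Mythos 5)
Your proposal is correct and follows essentially the same route as the paper: the lemma is exactly the conclusion of the preceding verifications (super-solution property of $\overline{u}_\e$, sub-solution property of $\underline{u}_\e$ under \eqref{C2}--\eqref{C3} in the weak sense permitted by the $C^1$-matching at $|x|=\e$, and the ordering via \eqref{C4}), combined with the uniqueness of the positive solution guaranteed by Proposition \ref{exist-sol} once \eqref{C1} places $\dep$ below $1/\lambda_1(\mep)$ via \eqref{principalEV2}. Your explicit identification of the solution produced by the sub-super method with $\uep$ through uniqueness is the (only) step the paper leaves implicit, and it is handled correctly.
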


Finally, we can see that
\begin{align*}
  \|\underline{u}_\e\|_{ L^1( B_0(1) ) } &= A_n \int_0^1 \underline{u}_\e (r) r^{n-1} \,dr \\
  &= \frac{A_n}{n} \left( \frac{c_2n}{e} |\log \e| + c_2 - \frac{2c_2}{e} \right),
\end{align*}
where $A_n$ denotes the surface area of $\partial B_0(1)$.
We also know $\|\mep\|_{ L^1( B_0(1) ) } = |B_0(1)| = A_n/n$.
Hence, we deduce that
\[ \frac{ \|\underline{u}_\e\|_{ L^1( B_0(1) ) } }{ \|\mep\|_{ L^1( B_0(1) ) } } = c_2 \left( \frac{n}{e} |\log \e| + 1 - \frac{2}{e} \right). \]
The above equation and Lemma \ref{sub-super-thm} complete the proof of Theorem \ref{subsol-2dim}.
\end{proof}



\section{Appendix}

We obtain the asymptotic behavior of $\lambda_1(\mep; 1)$ in the one-dimensional case by Lemma \ref{LEV1dim}.
In this section we additionally consider the two-dimensional case.
Suppose $n = 2$ and $\Omega = B_0(1) = \{ x \in \R^2 \mid |x| < 1 \}$.
For small $\e \in (0,1)$, let $m(x)$ be defined by \eqref{resource2}.
We deal with the least eigenvalue $\lambda_1(\mep; 2)\, (\,>0)$ for the eigenvalue problem \eqref{EVP}.

\begin{proposition}\label{LEV2dim}
  We have $\lambda_1(\mep; 2) = O(1/|\log \e|)$ as $\e \to 0$.
\end{proposition}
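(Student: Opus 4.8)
The plan is to estimate $\lambda_1(\mep;2)$ from both sides using the Rayleigh quotient characterization
\[ \lambda_1(\mep;2) = \inf_{0 \neq \phi \in H_0^1(B_0(1))} \frac{\int_{B_0(1)} |\nabla \phi|^2 \,dx}{\int_{B_0(1)} \mep \phi^2 \,dx}, \]
together with the explicit structure of $\mep$, which is supported on the small ball $B_0(\e)$ where it takes the value $1/\e^2$. Since $\mep$ is radial and the minimizer inherits the radial symmetry, it suffices to work with radial functions $\phi(r)$ on $(0,1)$ with $\phi(1)=0$, for which the quotient becomes
\[ \frac{\int_0^1 \phi_r^2 \, r \, dr}{\e^{-2}\int_0^\e \phi^2 \, r \, dr}. \]

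For the \textbf{upper bound}, I would not simply reuse the tent function from \eqref{principalEV2}: in $n=2$ that test function gives only $\lambda_1 \le 6$, which is an $O(1)$ bound and too crude. Instead, the natural competitor is the function that is constant equal to $1$ on $\overline{B_0(\e)}$ and equals $\log(1/|x|)/\log(1/\e)$ on the annulus $\e \le |x| \le 1$ (this is harmonic on the annulus, matches continuously, and vanishes on $\partial B_0(1)$). Its Dirichlet energy over the annulus is, up to the surface-area constant, $\int_\e^1 (r\log(1/\e))^{-2}\, r\,dr = 1/\log(1/\e)$, while the denominator $\e^{-2}\int_0^\e 1 \cdot r\,dr$ is a positive constant. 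This yields $\lambda_1(\mep;2) \le C/|\log\e|$ for small $\e$, which is the desired upper rate.

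For the \textbf{lower bound}, the obstacle is controlling $\int_0^\e \phi^2 r\,dr$ from above in terms of the global Dirichlet energy; this is where the logarithmic capacity of a small disk in the plane enters, and it is the crux of the argument. I would split $\int_0^1 \phi^2 r\,dr$ has no direct bound, so instead bound the relevant piece: for $r \le \e$ write $\phi(r) = -\int_r^1 \phi_s\,ds$ and use Cauchy--Schwarz with the weight $s\,ds$ to get $\phi(r)^2 \le \left(\int_r^1 \phi_s^2\, s\,ds\right)\left(\int_r^1 \frac{ds}{s}\right) \le \left(\int_0^1 \phi_s^2\, s\,ds\right) \log(1/r)$ for $r<1$. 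Integrating against $r\,dr$ over $(0,\e)$ gives $\int_0^\e \phi^2\, r\,dr \le \left(\int_0^1 \phi_s^2\, s\,ds\right)\int_0^\e r\log(1/r)\,dr$, and $\int_0^\e r\log(1/r)\,dr = \frac{\e^2}{2}\log(1/\e) + \frac{\e^2}{4} = O(\e^2\log(1/\e))$. Therefore $\e^{-2}\int_0^\e \phi^2\, r\,dr \le C \log(1/\e)\,\int_0^1 \phi_s^2\, s\,ds$, which rearranges to $\lambda_1(\mep;2) \ge c/|\log\e|$ for small $\e$. Combining the two bounds gives $\lambda_1(\mep;2) = O(1/|\log\e|)$ and, in fact, that $|\log\e|\,\lambda_1(\mep;2)$ is bounded above and below by positive constants; reducing the general (non-radial) case to the radial one at the outset via symmetrization of the Rayleigh quotient completes the argument.
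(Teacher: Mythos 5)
Your proposal is correct, but it proves the proposition by a genuinely different route from the paper. The paper solves the radial eigenvalue problem explicitly --- a Bessel profile $J_0(\sqrt{\lambda}\,r/\e)$ on $(0,\e)$ and the harmonic profile $\log r$ on $(\e,1)$ --- and matches at $r=\e$ to obtain the transcendental equation $J_0(\sqrt{\lambda})=\sqrt{\lambda}\,J_1(\sqrt{\lambda})\,|\log\e|$, whose first root is then located via the Maclaurin expansions of $J_0$ and $J_1$, giving $1/|\log\e|\le\lambda_1(\mep;2)\le 4/|\log\e|$. You instead argue variationally: your capacity-type test function (constant on $\overline{B_0(\e)}$, equal to $\log(1/|x|)/\log(1/\e)$ on the annulus) yields $\lambda_1(\mep;2)\le 2/|\log\e|$ --- which is in fact the sharp constant, since the paper's transcendental equation gives $\lambda_1(\mep;2)\sim 2/|\log\e|$ --- and you are right that the tent function behind \eqref{principalEV2} degenerates to the useless bound $6$ when $n=2$. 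Your lower bound via the weighted Cauchy--Schwarz estimate $\phi(r)^2\le\log(1/r)\int_0^1\phi_s^2\,s\,ds$ together with $\int_0^\e r\log(1/r)\,dr=\tfrac{\e^2}{2}\log(1/\e)+\tfrac{\e^2}{4}$ is also correct; note that you can dispense with the radial symmetrization entirely by applying the same estimate to $\phi(r,\theta)=-\int_r^1\partial_s\phi(s,\theta)\,ds$ for each fixed angle and integrating in $\theta$, using $|\partial_s\phi|\le|\nabla\phi|$ (the upper bound needs no symmetrization since a single test function suffices). What you lose relative to the paper is the description of the full spectrum and the exact characterization of $\lambda_1$ as a root of an explicit equation; what you gain is an elementary, Bessel-free argument that transfers to non-radial weights and general planar domains. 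Strictly speaking the proposition only asserts the upper bound $O(1/|\log\e|)$, but both you and the paper establish the matching lower bound as well.
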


\begin{proof}
  It is well-known that the eigenfunction corresponding to $\lambda_1(\mep; 2)$ is radial because $\mep(x)$ is radial, and so we focus on the following eigenvalue problem:
  \begin{equation}\label{EVP2dim}
    \begin{cases}
      \phi_{rr} + \dfrac{1}{r}\phi_r + \lambda \mep(r) \phi = 0 &(0<r<1), \\
      \; \phi_r(0) = \phi(1) = 0
    \end{cases}
  \end{equation}
  where
  \[ \mep(r) =
    \begin{cases}
      1 / \e^2 &\text{for}\; r \in [0,\e], \\
      0 &\text{for}\; r \in (\e,1].
    \end{cases} 
  \]
  For the interval $(0,\e)$, we consider
  \[
    \begin{cases}
      \phi_{rr} + \dfrac{1}{r}\phi_r + \dfrac{\lambda}{\e^2} \phi = 0 &(0<r<\e), \\
      \; \phi_r(0) = 0.
    \end{cases}
  \]
  According to the theory of Bessel functions, the solution for the above differential equation is as follows:
  \[ \phi(r) = C_1 J_0 \left( \frac{\sqrt{\lambda}}{\e}r \right) \quad (0<r<\e), \]
  where $C_1$ is as arbitrary constant and $J_\nu(z)$ denotes the Bessel function of the first kind of the order $\nu$.
  See \cite{Watson} and references therein about Bessel functions.
  On the other hand, the differential equation in $(\e, 1)$;
  \[
    \begin{cases}
      \phi_{rr} + \dfrac{1}{r}\phi_r = 0 &(\e<r<1), \\
      \; \phi(1) = 0
    \end{cases}
  \]
  yields
  \[ \phi(r) = C_2 \log r \quad (\e < r <1), \]
  where $C_2$ is an arbitrary constant.
  Then we connect these functions at $r = \e$, that is,
  \[ C_1J_0(\sqrt{\lambda}) = \phi(\e) = C_2 \log \e \quad\text{and}\quad -\frac{\sqrt{\lambda}}{\e} C_1 J_1(\sqrt{\lambda}) = \phi_r(\e) = \frac{C_2}{\e}. \]
  These are equivalent to
  \begin{equation}\label{coeff-mat2dim}
    \begin{bmatrix} J_0(\sqrt{\lambda}) & |\log \e| \\ \sqrt{\lambda} J_1(\sqrt{\lambda}) / \e & 1/\e \end{bmatrix}
    \begin{bmatrix} C_1 \\ C_2 \end{bmatrix}
    =
    \begin{bmatrix} 0 \\ 0 \end{bmatrix}
    .
  \end{equation}
  The eigenvalue problem \eqref{EVP2dim} has non-trivial solutions if and only if the determinant of the coefficient matrix for \eqref{coeff-mat2dim} is zero.
  Hence we have
  \begin{equation}\label{det0-2dim}
    J_0(\sqrt{\lambda}) - \sqrt{\lambda} J_1(\sqrt{\lambda}) |\log \e| = 0.
  \end{equation}
  Now we introduce the zeros of Bessel functions.
  Zeros of $J_0(z)$ denote $z_{0,1},\, z_{0,2},\, z_{0,3}, \dots$ and those of $J_1(z)$ denote $0=z_{1,0},\, z_{1,1},\, z_{1,2},\, z_{1,3}, \dots$.
  It is well-known that
  \[ 0 = z_{1,0} < z_{0,1} < z_{1,1} < z_{0,2} < z_{1,2} < \cdots, \]
  and so $\lambda = (z_{i,j})^2$ are not solutions for \eqref{det0-2dim}.
  Thus, dividing \eqref{det0-2dim} by $\sqrt{\lambda} J_1(\sqrt{\lambda})$, we obtain the following equation:
  \[ \left( g(\sqrt{\lambda}) := \,\right)\, \frac{ J_0(\sqrt{\lambda}) }{ \sqrt{\lambda} J_1(\sqrt{\lambda}) } = |\log \e|. \]
  For fixed $\e$, the graph of $g(z) \;(z>0)$ and the constant $|\log \e|$ have infinitely positive intersections: $z_1^*,\, z_2^*,\, z_3^*, \dots$ which are depending on $\e$.
  Hence the least eigenvalue is characterized by $\lambda_1(\mep; 2) = (z_1^*)^2$ and $0=z_{1,0} < z_1^* < z_{0,1}$.
  By the graph of $g(z)$ and $\lim_{\e \to 0} |\log \e| = +\infty$, we deduce that
  \[ \lim_{\e \to 0} \lambda_1(\mep; 2) = \lim_{\e \to 0} \left( z_1^* \right)^2 = 0. \]
  
  Next, we introduce the asymptotic behavior of $J_0(z)$ and $J_1(z)$ as $z \to 0$:
  \begin{align*}
    J_0(z) &= 1 - \dfrac{z^2}{4} + O(z^4) \quad (z \to 0), \vspace{0.5mm} \\
    J_1(z) &= \dfrac{z}{2} - \dfrac{z^3}{16} + O(z^5) \quad (z \to 0).
  \end{align*}
  We compare
  \[ g(\sqrt{\lambda}) = \frac{1}{\lambda} \cdot \frac{ 1-\lambda/4+O(\lambda^2) }{ 1/2-\lambda/16+O(\lambda^2) } \quad (\lambda \to 0) \]
  and $|\log \e|$ when $\e$ is sufficiently small.
  Then
  \[ g\left( \sqrt{ 4/|\log \e| } \right) \le |\log \e| \le g\left( \sqrt{ 1/|\log \e| } \right) \]
  holds for efficiently small $\e \in (0,1)$.
  Thanks to the fact that $g(z)$ is monotone decreasing for $0=z_{1,0} < z < z_{0,1}$, we obtain
  \[ \sqrt{\frac{1}{|\log \e|}} \le z_1^* \le \sqrt{\frac{4}{|\log \e|}}, \quad\text{that is,}\quad \frac{1}{|\log \e|} \le \lambda_1(\mep;2) \le \frac{4}{|\log \e|} \]
  for efficiently small $\e$.
  The above inequality completes the proof.
\end{proof}



\medskip
Received xxxx 20xx; revised xxxx 20xx.
\medskip

\end{document}